\definecolor{alert}{rgb}{0.8,0,0}
\newcommand{\R}{\mathbb{R}}
\newcommand{\s}{\mathbb{S}}
\newcommand{\h}{\mathbb{H}}
\newcommand{\Nil}{\mathrm{Nil}_3(\tau)}
\newcommand{\Sol}{\mathrm{Sol}_3}
\newcommand{\fl}{\longrightarrow}
\newcommand{\oz}{\overline{z}}
 \newcommand{\p}{\widetilde{PSL}_2(\mathbb{R},\tau)}
\newcommand{\real}{\mathbb{R}}
\DeclareMathOperator{\sech}{sech}
\newtheorem{theorem}{Theorem}[section]
\newtheorem{proposition}[theorem]{Proposition}
\newtheorem{corollary}[theorem]{Corollary}
\newtheorem{lemma}[theorem]{Lemma}
\theoremstyle{definition}
\theoremstyle{remark}
\newtheorem{remark}[theorem]{Remark}
\numberwithin{equation}{section}
\title[Extrinsic curvature]{The Gauss map and second fundamental form of
surfaces\\ in a Lie group}
\author{Abigail Folha
and Carlos Pe\~{n}afiel}
\address{}
\email{}
\thanks{This work was partially supported by CNPq, Conselho Nacional de Desenvolvimento Cient{\'i}fico e Tecnol{\'o}gico - Brazil.}
\subjclass[2000]{Primary 53C42; Secondary 53C30}
\keywords{}
\begin{document}

\begin{abstract}
In this article, we give the integrability conditions for the existence of an isometric immersion from an orientable simply connected surface having prescribed Gauss map and  positive extrinsic curvature into some unimodular Lie groups. In particular, we discuss the case when the Lie group is the euclidean unit sphere $\s^3$ and establish a correspondence between simply connected surfaces having extrinsic curvature $K$, $K$ different from $0$ and $-1$,  immersed in  $\s^3$ with simply connected surfaces having non-vanishing extrinsic curvature immersed in the euclidean space $\R^3$. Moreover, we show that a surface isometrically immersed in $\s^3$ has positive constant extrinsic  curvature  if, and only if, its Gauss map is a harmonic map into the Riemann sphere.
 \end{abstract}

\maketitle

\section{Introduction}
In the classical theory of surfaces, the existence and uniqueness of a surface with prescribed Gauss map and for which the first fundamental form is a conformal metric were deeply studied. For instance, in the euclidean case $\R^3$, Kenmotsu \cite{Ke} gave a remarkable representation formula for arbitrary surfaces with non-vanishing mean curvature, which describes these surfaces in terms of their Gauss map and mean curvature function. For the hyperbolic space $\h^3$, the study of such surfaces and their geometry was done in \cite{Reiko} and for the euclidean unit sphere  $\s^3$, this topic was treated in \cite{Tanaka}. Recently for the homogeneous 3-manifolds, this topic was treated in \cite{GP}. In particular, those surfaces having constant mean curvature  were object of a huge amount of investigation.

It is well known that surfaces immersed in a space form having non-zero constant mean curvature are parallel to surfaces having positive Gaussian curvature (see, for instance  \cite[Section 3]{Tenenblat}). Under this framework the natural question which arises is about the description of surfaces in terms of their Gauss map and extrinsic  curvature function. In this line, for the euclidean case $\R^3$, the problem of existence of an immersion from a Riemann surface $S$ having a given Gauss map and a prescribed extrinsic curvature  was treated in \cite{GM}.  The study of this problem for the hyperbolic case $\h^3$ was treated in \cite{Shi}. Finally,  in \cite{BIK} was treated the case of immersions in the  euclidean sphere $\s^3$ having  negative extrinsic curvature $K, -1<K<0$. In all these works the key is to find a partial differential equation involving the Gauss map and the extrinsic curvature which is equivalent to the integrability conditions.   

In this article, in order to define a Gauss map we work on Lie groups, and we  focus our attention on unimodular Lie groups. More precisely, we are interested in the following unimodular Lie groups, 
the euclidean space $\R^3$, the canonical sphere $\s^3$, the Berger spheres, the space $\p$, the Heisenberg space $\Nil$, the space $\Sol$, see Section \ref{unimodular}.

Using the group structure, we are able to find a partial differential equation involving the Gauss map of the immersion and the extrinsic  curvature  which is equivalent to the integrability equations for positive extrinsic curvature, see Theorem \ref{t1}. When the extrinsic curvature $K$ is negative,  there is an additional  restriction on the values of $K$, see  Theorem \ref{t3}.  In particular, we prove that a surface isometrically immersed in the euclidean sphere $\s^3$ has constant  positive extrinsic curvature $K$ if, and only if, the Gauss map $g$ is a harmonic map into the Riemann sphere $\s^2$.

In the  case when the unimodular Lie group $M$ is the euclidean sphere $\s^3$, we also prove that, under some hypothesis,  there exists a correspondence between  simply connected surfaces immersed in $\s^3$ having positive (negative, different from -1) extrinsic curvature  $K$ with simply connected surfaces immersed in $\real^3$ having positive (negative)  extrinsic curvature  $K^*$, see Proposition \ref{pp1} for $K$ positive and Proposition \ref{p5} for a negative $K$.  We apply these results to construct an immersion in $\s^3$ having constant extrinsic curvature $K=-2$.

The outline of the article goes as follows. In Section 2 we deal with surfaces immersed in a Riemannian manifold having positive extrinsic curvature. Taking a conformal parameter for the second fundamental form, we establish a relationship which enable us to recover the immersion from the normal vector, the extrinsic curvature and the basis point, see Proposition \ref{p1}. In Section 3, we define the Gauss map and we present the Lie groups which we are interested in. We show the PDE involving the Gauss map and the extrinsic curvature which is satisfied by an arbitrary immersed surface. Also we give the integrability conditions for the existence of such immersion, see Theorem \ref{t1}. We finish section 3 considering the case where the Lie group is the euclidean sphere $\s^3$. In Section 4, we consider orientable isometric immersion having negative extrinsic curvature. Finally in Section 5, we construct an example of a surface having constant extrinsic curvature $K=-2$.

\section{Surfaces with positive extrinsic curvature}

Let $\Sigma$ be a Riemannian surface and $\varphi : \Sigma\fl M$ be an isometric immersion in an oriented 3-dimensional Riemannian manifold $M$. Assume that $\varphi$ has positive extrinsic curvature $K$, that is, such that the determinant of its shape operator is positive. Then, $\Sigma$ must be orientable and we can choose a global unit normal vector field $N$ such that the second fundamental form $II$ is positive definite. Given a local conformal parameter $z$ for the Riemannian metric $II$, the first and second fundamental forms of the immersion can be written as
$$
I:=\langle d \varphi, d \varphi\rangle = E\, dz^2+ 2F\, \vert dz\vert^2+ \overline{E}\,d\overline{z}^2,
$$
$$
II:= \langle - d N, d\varphi \rangle=\  2\rho\,  \vert dz\vert^2,
$$
where $\rho$ is a positive function and  $\overline{z}$ denotes the conjugate of $z$. The extrinsic curvature of $\Sigma$ is given by $K=-\dfrac{\rho^2}{D}$, where $D= \vert E\vert^2- F^2<0$.

Thus, the Weingarten equation remains
\begin{eqnarray}\label{e4}
\nabla_{\partial_z} N&=&\dfrac{\rho}{D}\,(F\, \partial_z -E\, \partial_{\overline{z}}),
\end{eqnarray}
where, for instance, $\partial_z$ denotes $\frac{\partial\ }{\partial z}$.

Since $z$ is a conformal parameter for the second fundamental form,  $\partial_z$ is simultaneously orthogonal to $N$ and to $\nabla_{\partial_z} N$, and so
$$
\partial_z=\alpha\  N\times_\varphi \nabla_{\partial_z} N,
$$
for some complex function $\alpha$, where $\times_\varphi$ denotes the cross product at the tangent space $T M$ of $M$ at the point $\varphi(z)$. Considering the inner product of $\partial_z$ with  $\nabla_{\partial_{\overline{z}}} N$ we obtain
\[
-\rho= \langle \partial_z,  \nabla_{\partial_{\overline{z}}} N\rangle
=\alpha\langle N\times_\varphi \nabla_{\partial_z} N, \nabla_{\partial_{\overline{z}}} N\rangle
=\alpha\,K\langle N,\partial_z\times_{\varphi}\partial_{\overline{z}}\rangle
= \alpha\,K\vert  \partial_z\times_\varphi\partial_{\overline{z}}\vert=i\,\alpha\,K\,\sqrt{-D},
\]
that is
\begin{equation*}
\partial_z=\dfrac{i}{\sqrt{K}}\  N\times_\varphi \nabla_{\partial_z} N.
\end{equation*}

Thus, the integrability equation of the immersion, $0=[\partial_z,\partial_{\oz}]$, is given by
\begin{eqnarray*}
0&=&\nabla_{\partial_z} \left(\dfrac{-i}{\sqrt{K}}\  N\times_\varphi \nabla_{\partial_{\oz}} N\right)-\nabla_{\partial_{\oz}} \left(\dfrac{i}{\sqrt{K}}\  N\times_\varphi \nabla_{\partial_z} N\right)\\
&=&(-i)\ N\times_{\varphi}\left(\nabla_{\partial_z}\left(\frac{1}{\sqrt{K}}\nabla_{\partial_{\oz}} N\right)+\nabla_{\partial_{\oz}}\left(\frac{1}{\sqrt{K}}\nabla_{\partial_{z}} N\right)\right).
\end{eqnarray*}

Summarizing, we have
\begin{proposition}\label{p1}
Let $\varphi:\Sigma\fl M$ be an isometric immersion from a Riemannian surface $\Sigma$ into a 3-dimensional Riemannian manifold $M$. Assume that $\varphi$ has positive extrinsic curvature $K$, and let $N$ be a unit normal vector field. Then,
\begin{equation}\label{e6}
\partial_z=\dfrac{i}{\sqrt{K}}\  N\times_\varphi \nabla_{\partial_z} N,
\end{equation}
where $z$ is a local conformal parameter for the second fundamental form.

Moreover, the integrability equation, $0=[\partial_z,\partial_{\oz}]$, is equivalent to
\begin{equation}
\label{e8}
N\times_{\varphi}\left(\nabla_{\partial_z}\left(\frac{1}{\sqrt{K}}\nabla_{\partial_{\oz}} N\right)+\nabla_{\partial_{\oz}}\left(\frac{1}{\sqrt{K}}\nabla_{\partial_{z}} N\right)\right)=0.
\end{equation}
\end{proposition}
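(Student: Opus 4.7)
The plan is to make precise the two-step derivation sketched in the paragraphs immediately preceding the statement. For the first assertion, I would start from the conformality of $z$ for the second fundamental form. The identity $II(\partial_z,\partial_z)=0$ translates into $\langle\nabla_{\partial_z}N,\partial_z\rangle=0$, and together with $\langle N,\partial_z\rangle=0$ it forces $\partial_z$ to be complex-proportional to $N\times_{\varphi}\nabla_{\partial_z}N$, say $\partial_z=\alpha\,N\times_{\varphi}\nabla_{\partial_z}N$. The content of (\ref{e6}) is then to identify $\alpha$.

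To pin down $\alpha$, I would pair both sides with $\nabla_{\partial_{\bar z}}N$. On the left, $\langle\partial_z,\nabla_{\partial_{\bar z}}N\rangle=-II(\partial_z,\partial_{\bar z})=-\rho$. On the right, the triple product $\langle N\times_{\varphi}\nabla_{\partial_z}N,\nabla_{\partial_{\bar z}}N\rangle$ is computed by expressing $\nabla_{\partial_z}N$ and $\nabla_{\partial_{\bar z}}N$ in the $\{\partial_z,\partial_{\bar z}\}$ basis via the Weingarten formula (\ref{e4}); the determinant of this change of basis is (up to sign) the determinant of the shape operator, i.e.\ $K$. This yields $\alpha K\langle N,\partial_z\times_{\varphi}\partial_{\bar z}\rangle$, and since $\partial_z\times_{\varphi}\partial_{\bar z}$ has norm $\sqrt{-D}$ and points in the direction of $N$ (up to the factor $i$ forced by the complex parametrization), one obtains $\alpha=i/\sqrt{K}$.

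For the integrability statement, I would use that the Levi-Civita connection of $M$ is torsion-free, so $[\partial_z,\partial_{\bar z}]=\nabla_{\partial_z}\partial_{\bar z}-\nabla_{\partial_{\bar z}}\partial_z$, and that on an oriented Riemannian $3$-manifold this connection is compatible with the cross product. Substituting (\ref{e6}) and its conjugate and applying the Leibniz rule produces, on each side, a term with the scalar factor differentiated and a term with the cross product differentiated. The two pieces of the form $\nabla_{\partial_z}N\times_{\varphi}\nabla_{\partial_{\bar z}}N$ and $\nabla_{\partial_{\bar z}}N\times_{\varphi}\nabla_{\partial_z}N$ that appear cancel by antisymmetry of $\times_{\varphi}$, while the remaining pieces regroup, factoring $N\times_{\varphi}$ out, into exactly the expression displayed in (\ref{e8}). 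Thus $[\partial_z,\partial_{\bar z}]=0$ if and only if (\ref{e8}) holds.

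The main obstacle I expect is the orientation/sign bookkeeping in the second step: one must fix the orientation of $N$ so that $N\times_{\varphi}$ acts on the isotropic line as multiplication by $-i$, and choose the branch of $\sqrt{K}$ coherently, in order to land on the exact constant $i/\sqrt{K}$ rather than $\pm i/\sqrt{K}$. The cross-product Leibniz computation in Step 3 is then routine, the only point requiring care being the verification that the two ``mixed'' terms really cancel (which is just the antisymmetry $X\times_{\varphi}Y=-Y\times_{\varphi}X$) so that every surviving term carries the outer factor $N\times_{\varphi}$.
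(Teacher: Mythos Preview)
Your proposal is correct and follows essentially the same route as the paper: the paper also writes $\partial_z=\alpha\,N\times_\varphi\nabla_{\partial_z}N$, pairs with $\nabla_{\partial_{\bar z}}N$ to get $-\rho=i\alpha K\sqrt{-D}$ and hence $\alpha=i/\sqrt{K}$, and then expands $[\partial_z,\partial_{\bar z}]=\nabla_{\partial_z}\partial_{\bar z}-\nabla_{\partial_{\bar z}}\partial_z$ via the Leibniz rule for $\times_\varphi$ to reach (\ref{e8}). Your remark that the two $\nabla_{\partial_z}N\times_\varphi\nabla_{\partial_{\bar z}}N$ terms cancel by antisymmetry is exactly the step the paper leaves implicit in passing from the first to the second line of its displayed computation.
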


Observe that when $M=\R^3$, the cross product does not depend on the point $\varphi(z)$ and so the immersion can be recovered, from (\ref{e6}), as
$$
\varphi_z=\dfrac{i}{\sqrt{K}}\  N\times N_z,
$$
(see also \cite{GHM,GM}). Moreover, if $K$ is a positive constant then the integrability equation is equivalent to $N\times N_{z\oz}=0$, that is, $N:\Sigma\fl\s^2$ is a harmonic map for the conformal structure induced by the second fundamental form.

For a general ambient space $M$ the cross product depend on the point $\varphi(z)$ and so the same process can not be followed as in $\R^3$. However, when $M$ is a Riemannian Lie group an alternative procedure can be used in order to recover the immersion $\varphi$ in terms of its Gauss map and extrinsic curvature, using (\ref{e6}).

\section{The Gauss map}

From now on, we assume that $M$ is a Riemannian Lie group with a left invariant metric, and let us choose a left invariant orthonormal frame $\{E_1(q), E_2(q), E_3(q)\},$ $q\in M$.  For simplicity $\{E_1, E_2, E_3\}$  stands for $\{E_1(e), E_2(e), E_3(e)\}$, where $e$ is the identity element of $M$; that is, $E_i(q)=q\, E_i$, where we are using the product of the group $M$. A vector $w=w_1 E_1+w_2 E_2+ w_3 E_3 $ in the Lie algebra $\mathfrak{m}$ of $M$ will be denoted by $w\equiv(w_1, w_2, w_3)$.

Let $\varphi:\Sigma\fl M$ be an isometric immersion with positive extrinsic curvature, and $N$ be the normal vector to $\Sigma$ such that the second fundamental form is positive definite.

Given a point $p\in \Sigma$, we extend the normal vector $N(p)\in T_p M,$ to a unique left invariant vector field. Then, $N(p)$ is associated to a vector $N^e(p)$ in the Lie algebra $\mathfrak{m}$ of $M$ given by
\[
N^e(p):= \sum_{i=1}^{3} \langle N(p), E_i(\varphi(p))\rangle E_i,
\]
or equivalently, using the group structure of $M$, $N^e(p)=\varphi(p)^{-1}\,N(p)$.

$N^e:\Sigma\fl\s^2\subseteq\mathfrak{m}$ is called the \emph{Gauss map} of the immersion. As usual, we will also call Gauss map to the composition $g: \Sigma\fl \mathbb{C}\cup \{\infty\}$ of the stereographic projection  (with respect to the north pole) and $N^e$, that is,
 \begin{equation}
\label{e2}
g= \dfrac{N_1+ i\,  N_2}{1-N_3}.
\end{equation}
Conversely, $N^e$ can be recovered from $g$ as
\begin{equation}
\label{e3}
N^e\equiv(N_1,N_2,N_3)= \dfrac{1}{1+\vert g\vert^2}\,(g+\overline{g},\  -i\,(g-\overline{g}),\  -1+\vert g\vert^2).
\end{equation}

Now, let $z$ be a local conformal parameter for the second fundamental form, and write
$$
\varphi(p)^{-1}\,\varphi_z(p)=\sum_{i=1}^{3} \langle \varphi_z(p), E_i(\varphi(p))\rangle E_i=:a_1 E_1+ a_2 E_2+ a_3E_3,
$$
where $\varphi_z$ denotes $\partial_z$ in the Lie group $M$.

Since $M$ is a Riemannian Lie group, we can rewrite (\ref{e6}) as
\begin{equation}
\label{e11}
(a_1, a_2, a_3)=\dfrac{i}{\sqrt{K}} (N_1, N_2, N_3) \times \left((N_1)_z+\sum_{i,j} N_i a_j \Gamma_{ij}^1\, , \,(N_2)_z+\sum_{i,j} N_i a_j \Gamma_{ij}^2\, , \, (N_3)_z+\sum_{i,j} N_i a_j \Gamma_{ij}^3 \right),
\end{equation}
where $\Gamma_{ij}^k$ are the Christoffel symbols associated with the orthonormal basis, that is, $\nabla_{E_i(q)}E_j(q)=\sum_{k=1}^3\Gamma_{ij}^kE_k(q)$. Observe that $\Gamma_{ij}^k$ are real constants since $\{E_1(q),E_2(q),E_3(q)\}$ is a left invariant basis. Here, $\times$ denotes the standard cross product in the Lie algebra $\mathfrak{m}$.

So, (\ref{e11}) can be considered as a system of linear equations in the unknowns $a_1,a_2,a_3$. Hence, as long as the discriminant of the above system is different from zero, we can determine $\varphi(p)^{-1}\,\varphi_z(p)$ in terms of the Gauss map and the extrinsic curvature of the immersion. As we will see, once $\varphi(p)^{-1}\,\varphi_z(p)$ is calculated we can compute the immersion $\varphi$, up to left translations. Thus, we will say that the immersion is determined in terms of its Gauss map and its extrinsic curvature.

Moreover, note that $N$ and $K$ must satisfy the integrability condition (\ref{e8}), which can again be rewritten in terms of $N^e$ and $K$ using left translations in the Riemannian Lie group $M$. That is, if we denote by
$
V\equiv(v_1,v_2,v_3)$ the left translation of $\nabla_{\varphi_z}\left(\frac{1}{\sqrt{K}}\nabla_{\varphi_{\oz}} N\right)
$ to the identity element $e$, then (\ref{e8}) is equivalent to
\begin{eqnarray}\label{e8bis}
&N^e \text{ is parallel to  }\mathfrak{Re}(V), \text{ with} \\
&
v_k=\left(\frac{1}{\sqrt{K}}\left((N_k)_{\oz}+\sum_{i,j}\overline{a_i}N_j\Gamma_{ij}^k\right)\right)_{\! z}+\frac{1}{\sqrt{K}}\sum_{i,j,l,m}\left((N_m)_{\oz}+\overline{a_i}N_j\Gamma_{ij}^m\right)a_l\Gamma_{lm}^k,\nonumber
\end{eqnarray}
where $\mathfrak{Re}(\cdot)$ denotes the real part of a complex number.

Therefore, the Gauss map and extrinsic curvature of the immersion $\varphi$ must satisfy (\ref{e11}) and (\ref{e8bis}). Conversely, we obtain

\begin{theorem}\label{teo1}
Let $M$ be a Riemannian Lie group, and $\{E_1(q),E_2(q),E_3(q)\}$ a left invariant orthonormal frame with associated Christoffel symbols $\Gamma_{ij}^k$. Consider a simply connected Riemann surface $\Sigma$, and let $N^e:\Sigma\fl\s^2\subseteq\mathfrak{m}$ be a map, and $K:\Sigma\fl\R$ be a positive function. Assume that $(a_1,a_2,a_3)$ is a smooth solution to (\ref{e11}), so that (\ref{e8bis}) is satisfied.

Then, there exists a unique immersion $\varphi:\Sigma\fl M$, up to left translations, such that $N^e$ is its Gauss map, $K$ is its extrinsic curvature and the conformal structure of $\Sigma$ is that of the second fundamental form induced by $\varphi$, with
\begin{equation}\label{ecuacion2}
\varphi_z(p)=a_1(p)E_1(\varphi(p))+a_2(p)E_2(\varphi(p))+a_3(p)E_3(\varphi(p)).
\end{equation}
\end{theorem}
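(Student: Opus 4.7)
The plan is to integrate equation (\ref{ecuacion2}) on the simply connected surface $\Sigma$ by the classical Cartan--Darboux (Maurer--Cartan) construction, and then to verify that the resulting map $\varphi$ carries all the prescribed data.

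First, I would introduce the $\mathfrak{m}$-valued 1-form $\omega$ on $\Sigma$ defined by $\omega(\partial_z) := \alpha$ and $\omega(\partial_{\oz}) := \overline{\alpha}$, where $\alpha := a_1 E_1 + a_2 E_2 + a_3 E_3 \in \mathfrak{m}$. By the Cartan--Darboux theorem on Lie group--valued primitives, if $\omega$ satisfies the Maurer--Cartan equation, then, $\Sigma$ being simply connected, there exists $\varphi:\Sigma \fl M$, unique up to left translation, with $\varphi^{-1}\,d\varphi = \omega$; this is exactly (\ref{ecuacion2}). Evaluated on $(\partial_z,\partial_{\oz})$, and using $[\partial_z,\partial_{\oz}] = 0$, Maurer--Cartan reduces to the single vector identity
\[
\alpha_{\oz} - \overline{\alpha}_{z} + [\alpha,\overline{\alpha}] = 0 \quad \text{in } \mathfrak{m},
\]
which, via $[E_i,E_j] = \sum_k (\Gamma_{ij}^k - \Gamma_{ji}^k)\, E_k$, is a system of three scalar equations in the $a_i$ and their derivatives.

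The central step is to show that, under (\ref{e11}), this Maurer--Cartan identity is equivalent to (\ref{e8bis}). The idea is to rewrite (\ref{e11}) as $N^e \times \alpha = -\tfrac{i}{\sqrt{K}}\, W$, where $W$ is the left--translate of $\nabla_{\varphi_z} N$ (the triple on the right--hand side of (\ref{e11})); then, combining $\langle N^e,\alpha\rangle = 0$ and $|N^e| = 1$ with the triple--product identity $N^e\times(N^e\times X) = \langle N^e,X\rangle N^e - X$, one can convert the components of $V$ in (\ref{e8bis}) into the real and imaginary parts of $\alpha_{\oz} - \overline{\alpha}_{z} + [\alpha,\overline{\alpha}]$, modulo a vector proportional to $N^e$. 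Requiring $V$ to be parallel to $N^e$ exactly kills the tangential piece and delivers Maurer--Cartan; the converse direction is analogous. This algebraic identification is where I expect the main technical difficulty to lie, since it requires careful bookkeeping with the cross product, with the antisymmetric combinations $\Gamma_{ij}^k - \Gamma_{ji}^k$ (the structure constants of $\mathfrak{m}$), and with the symmetric combinations that enter $\nabla$.

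Once $\varphi$ has been produced, the remaining verifications are short. Because (\ref{e11}) realises $\alpha$ as a cross product with $N^e$, one has $\alpha \perp N^e$ in $\mathfrak{m}$, so $N := \varphi\cdot N^e$ is a unit normal to the immersion and $N^e$ is its Gauss map. Reading (\ref{e11}) backwards as (\ref{e6}), the Weingarten equation recovers $II = 2\rho\,|dz|^2$ for some positive function $\rho$, so $z$ is a conformal parameter for $II$ and the identity $K = -\rho^2/D$ returns the prescribed $K$ as the extrinsic curvature. Uniqueness up to left translation is inherited directly from the Cartan--Darboux step.
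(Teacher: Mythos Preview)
Your proposal is correct and follows essentially the same route as the paper. The paper invokes the classical Frobenius theorem for the first-order system $\varphi^{-1}\varphi_z=\alpha$, which is exactly your Cartan--Darboux/Maurer--Cartan step; the paper then appeals to the computation preceding the theorem (together with Proposition~\ref{p1}) to identify the integrability condition $[\varphi_z,\varphi_{\oz}]=0$ with (\ref{e8bis}), and finally verifies from (\ref{e6}) that $N$ is unit normal and that $\langle\varphi_z,-\nabla_{\varphi_z}N\rangle=0$, giving the conformality of $z$ for $II$ and hence $K$ as the extrinsic curvature---matching your concluding verifications.
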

\begin{proof}
First, assume the existence of an immersion $\varphi$ satisfying (\ref{ecuacion2}). If we consider the vector field $N(p)=N_1(p)E_1(\varphi(p))+N_2(p)E_2(\varphi(p))+N_3(p)E_3(\varphi(p))$, then (\ref{e11}) is equivalent to (\ref{e6}), that is,
$$
\varphi_z=\frac{i}{\sqrt{K}}\ N\times_{\varphi}\nabla_{\varphi_z}N.
$$
Hence, $N$ is a unit normal vector field and so $N^e$ is its Gauss map. Moreover, $K$ must be its extrinsic curvature, and the conformal structure of $\Sigma$ is that of its second fundamental form because
$$
\langle\varphi_z,-\nabla_{\varphi_z}N\rangle=\frac{i}{\sqrt{K}}\langle N\times_{\varphi}\nabla_{\varphi_z}N,-\nabla_{\varphi_z}N\rangle=0.
$$

Finally, we want to show existence and uniqueness to the first order PDE 
\begin{equation}\label{ecuacion}
\varphi(p)^{-1}\varphi_z(p)=a_1(p)E_1+a_2(p)E_2+a_3(p)E_3,
\end{equation}
in the simply connected surface $\Sigma$. To do this, we apply
the classical Frobenius Theorem, and so we need to prove that $[\varphi_z,\varphi_{\oz}]=0$. But, as we have shown previously, this condition is equivalent to (\ref{e8bis}). 
Hence, once we prescribe an initial condition $\varphi(p_0)=q_0\in M$, there exists a unique immersion $\varphi:\Sigma\fl M$ such that (\ref{ecuacion}) (or (\ref{ecuacion2})) is satisfied with $\varphi(p_0)=q_0$.
\end{proof}
\begin{remark}
From the comments previous to the above theorem, it should be observed that if there is no solution $(a_1,a_2,a_3)$ to (\ref{e11}), so that (\ref{e8bis}) is satisfied, then there is no immersion such that $N^e$ is its Gauss map, $K$ is its extrinsic curvature and the conformal structure of $\Sigma$ is that of its second fundamental form.
\end{remark}

\subsection{Unimodular Lie groups.}\label{unimodular}

Since the Lie bracket and the cross product in the Lie algebra $\mathfrak{m}$ are skew-symmetric bilinear forms, they are related by a unique endomorphism $L: \mathfrak{m}\longrightarrow \mathfrak{m}$ satisfying $L(X\times Y)=[X,Y]$, for all $X,Y\in \mathfrak{m}.$

The Lie group $M$ is called \emph{unimodular} if $L$ is self-adjoint \cite[Lemma 4.1]{JMilnor}.

From now on, let us assume that $M$ is unimodular. Then, since $L$ is self-adjoint, there exists a positively oriented orthornormal basis $\{ E_1, E_2, E_3\}$ of $\mathfrak{m}$ of eigenvalues of $L$, that is
\begin{equation}\label{e1}
[E_2,E_3]=c_1 E_1, \quad [E_3,E_1]=c_2 E_2, \quad [E_1,E_2]=c_3 E_3,
\end{equation}
for constants $c_1,c_2,c_3 \in\mathbb{R}$. Throughout this section, we will work with the left invariant frame $\{E_1(q), E_2(q), E_3(q)\}$ of $M$,  such that $E_i(e)=E_i,$  $i=1,2,3 $ satisfy \eqref{e1}, where $e$ is the identity element of $M$.

As usual, we define
\[
\mu_1=\dfrac{-c_1+c_2+c_3}{2}, \quad\mu_2=\dfrac{c_1-c_2+c_3}{2}, \quad\mu_3=\dfrac{c_1+c_2-c_3}{2}.
\]
In terms of these new constants we can write the connection of $M$ as

\begin{equation}\label{simbolos}
\begin{array}{lll}
\nabla_{E_1(q)}E_1(q)=0  \quad  & \nabla_{E_1(q)}E_2(q)=\mu_1 E_3(q) \quad & \nabla_{E_1(q)}E_3(q)=-\mu_1 E_2(q)\\[15pt]
\nabla_{E_2(q)}E_1(q)=-\mu_2 E_3(q) \quad & \nabla_{E_2(q)}E_2(q)=0 \quad & \nabla_{E_2(q)}E_3(q)=\mu_2 E_1(q)\\[15pt]
\nabla_{E_3(q)}E_1(q)=\mu_3 E_2(q)  \quad  & \nabla_{E_3(q)}E_2(q)=-\mu_3 E_1(q) \quad & \nabla_{E_3(q)}E_3(q)=0.
\end{array}
\end{equation}

In \cite[Section 2.6]{Perez} the authors classified, in terms of $\mu_i,\  i=1,2,3$, the unimodular Lie groups. Here we present some of them:
\begin{itemize}
\item $\real^3$, for $\mu_1=\mu_2=\mu_3=0$.
\item $\s^3$, for $\mu_1=\mu_2=\mu_3=1$.
\item The Berger spheres, for $\mu_1=\mu_2= \tau, \mu_3= \dfrac{1- 2\tau^2}{2\tau}$, with $\tau>0$.
\item $\widetilde{PSL}_2(\real, \tau)$, for $\mu_1=\mu_2=-\tau , \mu_3= \dfrac{1+ 2\tau^2}{2\tau}$, with $\tau>0$.
\item $\mathrm{Nil}_3(\tau)$, for $\mu_1=\mu_2=\tau , \mu_3= -\tau$, with $\tau>0$.
\item $\mathrm{Sol}_3$, for $\mu_1= -1, \mu_2=1 ,\mu_3= 0$.
\end{itemize}
These spaces are simply connected homogeneous 3-dimensional manifolds. With exception of the Berger spheres, these spaces belong to the eight Thurton's geometries. For further details about them see \cite{Benoit}.

As a consequence, we obtain  the following result for unimodular Lie groups.

\begin{proposition}\label{p2}
Let $\varphi:\Sigma\fl M$ be an isometric immersion on a unimodular Lie group $M$ and  $\{E_1,E_2,E_3\}$ be a positively oriented orthonormal basis of $\mathfrak{m}$ satisfying \eqref{e1}. Assume that $\varphi$ has positive extrinsic curvature $K$ and Gauss map
$g: \Sigma \fl\mathbb{C}\cup\{\infty\}$. Let $z$ be a conformal parameter for the second fundamental form and denote
$$
\varphi^{-1}\varphi_z=a_1 E_1+ a_2 E_2+a_3E_3\equiv (a_1,a_2,a_3).
$$
Then,
\begin{eqnarray}
&&\quad a_1=\dfrac{\overline{g}_z (A B\sqrt{
     K} - i ((A+2) (B-2) \mu_2 +
       2 g C \mu_3)) -
 g_z (  \overline{A} B\sqrt{K} - i ( (\overline{A}+2)( B-2) \mu_2 +
       2 \overline{g}C  \mu_3))}{B (B^2 K - \widetilde{C}^2 \mu_2 \mu_3 + \mu_1 ( C^2 \mu_3-(B-2)^2 \mu_2) - i\sqrt{K} (\vert A\vert^2 \mu_1 + \vert A+2\vert^2 \mu_2 +  4 \vert g\vert^2 \mu_3))}
\nonumber\\[2mm]
&&\quad a_2=\dfrac{ \overline{g}_z (A (B-2) \mu_1-iB\sqrt{K} (A+2)   - 2 g \widetilde{C} \mu_3) -
 g_z (\overline{A} (B-2)  \mu_1 +i  B  \sqrt{K}(\overline{A}+2) + 2  \overline{g} \widetilde{C} \mu_3)}{B (B^2 K -  \widetilde{C}^2 \mu_2 \mu_3 + \mu_1 (C^2 \mu_3-(B-2)^2 \mu_2 ) - i\sqrt{K} (\vert A\vert^2  \mu_1 + \vert A+2\vert^2 \mu_2 + 4 \vert g \vert^2 \mu_3))}
\label{unim}\\[2mm]
&&\quad a_3=\dfrac{i\left(
    (\mu_1-\mu_2)(g(g_z-g^2\overline{g}_z)- \overline{g}(\overline{g}_z- \overline{g}^2 g_z))+ LB(g \overline{g}_z- \overline{g} g_z)\right)}{B (B^2 K -  \widetilde{C}^2 \mu_2 \mu_3 + \mu_1 (C^2 \mu_3-(B-2)^2 \mu_2 ) - i\sqrt{K} (\vert A\vert^2  \mu_1 + \vert A+2\vert^2 \mu_2 + 4 \vert g \vert^2 \mu_3))},\nonumber
\end{eqnarray}
where,
$
A=g^2-1,\quad
B=1+\vert g\vert^2,\quad
C=g-\overline{g},\quad
\widetilde{C}=g+\overline{g},\quad
L= 2 i \,\sqrt{K} +\mu_1+\mu_2.
$
\end{proposition}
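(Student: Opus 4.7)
The plan is to treat equation (\ref{e11}) as a $3\times 3$ complex-linear system in the unknowns $a_1,a_2,a_3$, solve it by Cramer's rule, and then substitute the stereographic expressions (\ref{e3}) for $N^e$ in terms of $g$ to obtain the closed-form formulas in (\ref{unim}).

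From the connection table (\ref{simbolos}), the only nonzero Christoffel symbols are $\Gamma_{23}^1=\mu_2$, $\Gamma_{32}^1=-\mu_3$, $\Gamma_{13}^2=-\mu_1$, $\Gamma_{31}^2=\mu_3$, $\Gamma_{12}^3=\mu_1$, $\Gamma_{21}^3=-\mu_2$. Consequently the $a$-dependent part of the $k$-th component of $\nabla_{\partial_z}N$ is a single antisymmetric bilinear expression in $N$ and $a$: for $k=1$ it is $\mu_2 N_2 a_3-\mu_3 N_3 a_2$, and similarly (cyclically) for $k=2,3$. Moving these contributions to the left-hand side of (\ref{e11}) gives a system
\[
\mathcal{M}(N,K,\mu)\,(a_1,a_2,a_3)^T=\dfrac{i}{\sqrt{K}}\,(N_1,N_2,N_3)\times\bigl((N_1)_z,(N_2)_z,(N_3)_z\bigr)^T,
\]
whose $3\times 3$ matrix $\mathcal{M}$ has entries polynomial in the $N_i$, the $\mu_j$ and $1/\sqrt K$. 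Cramer's rule then yields each $a_k$ as a ratio $\det\mathcal{M}_k/\det\mathcal{M}$, with $\mathcal{M}_k$ obtained by replacing the $k$-th column with the right-hand side.

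The second step is purely algebraic. Using (\ref{e3}) one rewrites every $N_j$ as a rational function of $g,\overline{g}$ with denominator $B=1+|g|^2$, and every $(N_j)_z$ as a $\mathbb{C}$-linear combination of $g_z$ and $\overline{g}_z$. Clearing the common $B$-factors and applying $|N^e|^2=1$ to collapse the higher-degree-in-$N$ combinations that appear, $\det\mathcal{M}$ simplifies to precisely the expression in parentheses in the common denominator of (\ref{unim}), multiplied by a single factor $B$. For the numerators, grouping by $g_z$ versus $\overline{g}_z$ and using the shorthand $A=g^2-1$, $C=g-\overline{g}$, $\widetilde{C}=g+\overline{g}$, $L=2i\sqrt K+\mu_1+\mu_2$ to absorb the recurring polynomial combinations of $g,\overline{g}$, the displayed expressions for $a_1$, $a_2$, $a_3$ emerge; in particular the formula for $a_3$ is visibly shorter because the cofactor in question kills the $\mu_3$-coefficient of $N_3$, leaving only the contributions driven by $\mu_1-\mu_2$ and $L$.

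The main obstacle is algebraic bookkeeping rather than any hidden geometric idea: the $3\times 3$ determinants produce many summands, each of which fragments further when $N$ and $(N)_z$ are expressed through $g,g_z,\overline{g},\overline{g}_z$. To keep the computation manageable one introduces $A,B,C,\widetilde{C},L$ from the very start and exploits two symmetries of the problem, namely the antiholomorphic involution $(g,g_z)\leftrightarrow(\overline{g},\overline{g}_z)$, which forces the two summands within each numerator to be exchanged (up to the role of $i$ inside $L$), and the normalization $|N^e|^2=1$, which collapses the quadratic-in-$N$ expressions that accumulate during the expansion.
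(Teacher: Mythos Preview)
Your proposal is correct and follows essentially the same route as the paper: both treat (\ref{e11}) as a $3\times 3$ linear system in $a_1,a_2,a_3$ with coefficients coming from the Christoffel symbols (\ref{simbolos}), compute its discriminant in terms of $g$ and $K$, observe that it does not vanish, and then solve (the paper simply says ``a long, but straightforward, computation gives (\ref{unim})'', which is exactly your Cramer-plus-stereographic-substitution step). Your additional remarks on exploiting the involution $(g,g_z)\leftrightarrow(\overline g,\overline g_z)$ and the normalization $|N^e|^2=1$ are helpful organizational details, but they do not constitute a different method.
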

\begin{proof}
A straightforward computation gives us that the discriminant of the system of linear equations (\ref{e11}) is
$$
\dfrac {B^2  K - \widetilde {C}^2 \mu_2 \mu_3 + \mu_1 (-(B - 2)^2 \mu_2 + C^2 \mu_3) -  i \sqrt {K} (\vert A \vert^2 \mu_1 + \vert A + 2\vert^2 \mu_2 +
       4 \vert g \vert^2\mu_ 3)} {B^2 K},
$$
where $N^e$ is written in terms of $g$, and the Christoffel symbols $\Gamma_{ij}^k$ are obtained from (\ref{simbolos}).

Since this function does not vanish anywhere we have a unique solution in the unknowns $a_1,a_2,a_3$. Thus, a long, but straightforward, computation gives (\ref{unim}).
\end{proof}

From the previous expression of the functions $a_i$ we can compute the first and second fundamental form of the immersion $\varphi:\Sigma\fl M$. Bearing in mind that the expressions are too long, we will write this calculation for the specially interesting  case of unimodular Lie groups with an isometry group of dimension larger than or equal to four. In this case we can assume $\mu_1=\mu_2$.

\begin{corollary}\label{c3}
Let $\varphi:\Sigma\fl M$ be an isometric immersion in a unimodular Lie group $M$ such that $\mu_1=\mu_2$. Assume that $\Sigma$ has positive extrinsic curvature $K$ and Gauss map
$g: \Sigma \fl \mathbb{C}\cup\{\infty\}$. Let $z$ be a conformal parameter for the second fundamental form. Then,  the coefficients of the first fundamental form are  written in terms of $g$ and $K$ as
\begin{eqnarray*}
&&E=\dfrac{(\overline{g}_z R_1 +
   g_z (2 i B^2 \sqrt{K} + R_2)) ( g_z \overline{R}_1+ \overline{g}_z (2 i B^2 \sqrt{K} +R_2)
  )}{(B (B^2 K -4\vert g \vert^2\mu_1 \mu_3 -(B-2)^2 \mu_1^2 - i\sqrt{K} R_2))^2}
\\
&&F=\dfrac{\overline{g}_{\overline{z}} (\overline{g}_z R_1 R_2+
    g_z R_3)
       +
 g_{\overline{z}} (g_z  R_2 \overline{R}_1
      +
   \overline{g}_z R_3))}{\vert B (B^2 K -4\vert g \vert^2\mu_1 \mu_3 -(B-2)^2 \mu_1^2 - i\sqrt{K}R_2)\vert^2}.
\end{eqnarray*}

Moreover, the second fundamental form $II=\sqrt{ - D \ K}\ |dz|^2$, where $D:= \vert E\vert^2- F^2$  is given by
\[
D=-\dfrac{4 B^4\left( R_4(\vert g_z\vert^2- \vert \overline{g}_z\vert^2)+
i \sqrt{K}(g_z g_{\overline{z}} \, \overline{R}_1 - \overline{g}_z \overline{g}_{\overline{z}} \, R_1)\right)^2}{\vert B (B^2 K -4\vert g \vert^2\mu_1 \mu_3 -(B-2)^2 \mu_1^2 - i\sqrt{K}R_2)\vert^4}.
\]
Here, $A, B, C$ and $\widetilde{C}$ were defined in Proposition \ref{p2} and
\begin{eqnarray*}
R_1&=&A^2 \mu_1 - (A+2)^2 \mu_1 + 4 g^2 \mu_3\\
 R_2&=&\vert A \vert^2 \mu_1 + \vert A+2\vert^2 \mu_1 +
      4\vert g\vert^2 \mu_3\\
      R_3&=&2 B^4 K + \vert A\vert^4\mu_1^2 + \vert A+2\vert^4\mu_1^2 +8\vert  g\vert^2 (B-2)^2 \mu_1 \mu_3 + 16\vert g\vert^4  \mu_3^2
       -2  C^2 \widetilde{C}^2 \mu_1^2\\
       R_4&=& B^2 K + 4\vert g \vert^2 \mu_1 \mu_3 + (B-2)^2 \mu_1^2.
\end{eqnarray*}
\end{corollary}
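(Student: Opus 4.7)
The plan is to substitute the explicit formulas for $a_1, a_2, a_3$ of Proposition \ref{p2}, specialised to $\mu_1 = \mu_2$, into the elementary identities
\[
E = \langle\varphi_z,\varphi_z\rangle = a_1^2 + a_2^2 + a_3^2, \qquad
F = \langle\varphi_z,\varphi_{\overline{z}}\rangle = |a_1|^2 + |a_2|^2 + |a_3|^2,
\]
which hold because $\{E_1(q), E_2(q), E_3(q)\}$ is an orthonormal left-invariant frame and $\varphi^{-1}\varphi_z$ has components $(a_1,a_2,a_3)$ in it. Two observations make this substitution feasible. First, with $\mu_1 = \mu_2$ the identity $C^2 - \widetilde{C}^2 = -4|g|^2$ collapses the common denominator in $a_1,a_2,a_3$ to precisely the factor $B(B^2K - 4|g|^2\mu_1\mu_3 - (B-2)^2\mu_1^2 - i\sqrt{K}\, R_2)$ that appears in the statement; call this common denominator $\Delta$. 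Second, the $(\mu_1-\mu_2)$ term in the numerator of $a_3$ vanishes, so
\[
a_3 = \frac{iLB(g\overline{g}_z - \overline{g}g_z)}{\Delta}, \qquad L = 2i\sqrt{K} + 2\mu_1,
\]
i.e.\ $a_3$ depends on the derivatives of $g$ only through the single combination $g\overline{g}_z - \overline{g}g_z$.

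To handle $a_1$ and $a_2$ I would pass to the complex combinations $a_1 \pm ia_2$. Their numerators pair the $A$ and $A+2$ contributions and factor much more compactly than the numerators of $a_1$ or $a_2$ individually; then
\[
a_1^2+a_2^2 = (a_1+ia_2)(a_1-ia_2), \qquad |a_1|^2+|a_2|^2 = \tfrac12\bigl(|a_1+ia_2|^2 + |a_1-ia_2|^2\bigr),
\]
and adding the contributions of $a_3^2$ and $|a_3|^2$ reassembles the result, after matching monomials in $(g_z, g_{\overline{z}}, \overline{g}_z, \overline{g}_{\overline{z}})$, into the groupings $R_1, R_2, R_3$ of the statement.

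For the quantity $D = |E|^2 - F^2$ I would not square those already long expressions. Instead I would use the Lagrange-type identity
\[
|E|^2 - F^2 = -|(a_1,a_2,a_3)\times(\overline{a_1},\overline{a_2},\overline{a_3})|^2,
\]
together with the fact that the cross product at $\varphi(p)$ is orthogonal to both $\varphi_z$ and $\varphi_{\overline{z}}$: consequently $\varphi^{-1}\varphi_z \times \varphi^{-1}\varphi_{\overline{z}} = \lambda\, N^e$ for a single complex scalar $\lambda$, and $D = -|\lambda|^2$. Reading $\lambda$ off from any non-degenerate component — for instance $a_2\overline{a_3}-a_3\overline{a_2} = \lambda N_1^e$ — produces a single complex function whose modulus squared matches the announced numerator of $-D$. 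Because each component $a_j\overline{a_k}-a_k\overline{a_j}$ is purely imaginary, $\lambda$ is itself purely imaginary, which explains why the numerator displayed in the statement is a perfect square of a real quantity. The second fundamental form formula is then immediate from $II = 2\rho|dz|^2$ with $\rho = \sqrt{-DK}$.

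The main obstacle is the sheer size of the intermediate algebra: even with all three shortcuts (the $\mu_1 = \mu_2$ collapse, the $a_1\pm ia_2$ trick, and the cross-product reduction of $D$), the verification reduces to matching many coefficients of monomials in $(g,\overline{g},g_z,g_{\overline{z}},\overline{g}_z,\overline{g}_{\overline{z}},\mu_1,\mu_3,\sqrt{K})$ and recognising the combinations $R_1,R_2,R_3,R_4$, which in practice is most reliably performed with the aid of a computer algebra system.
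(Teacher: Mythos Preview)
Your approach is correct and is essentially the same as the paper's: the corollary is stated without a written proof, the surrounding text indicating only that one substitutes the expressions for $a_1,a_2,a_3$ from Proposition~\ref{p2} (specialised to $\mu_1=\mu_2$) into $E=\sum a_i^2$, $F=\sum|a_i|^2$ and computes. Your extra devices---passing to $a_1\pm i a_2$, and computing $D$ via the Lagrange identity $|E|^2-F^2=-|\,a\times\overline a\,|^2$ together with $a\times\overline a\parallel N^e$---are genuine simplifications of that brute-force calculation, but they do not change the underlying strategy.
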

Once $a_1,a_2,a_3$ are computed in terms of the Gauss map and the extrinsic curvature, observe that $N^e$ and $K$ must satisfy the equation (\ref{e8bis}). For instance, in the case of $\real^3$ this fact determines $K$ in terms of $N^e$, up to a positive constant, (see \cite[Theorem 3]{GM}). Let us study this equation for unimodular Lie groups with an isometry group of dimension larger than or equal to four.
\begin{theorem}
\label{t1}
Let $\varphi:\Sigma\fl M$ be an isometric immersion in a unimodular Lie group $M$. Assume that $\varphi$ has positive extrinsic curvature $K$ and Gauss map
$g: \Sigma \fl \mathbb{C}\cup\{\infty\}$. Let $z$ be a conformal parameter for the second fundamental form. Then, when $\mu_1=\mu_2$, the Gauss map satisfies
\begin{equation}
\label{e13}
g_{z\overline{z}}= G_1 g_z g_{\overline{z}}+ G_2 g_z \overline{g}_{\overline{z}}+ G_3 \overline{g}_z g_{\overline{z}}+ G_4 \overline{g}_z\overline{g}_{\overline{z}}+ G_5 K_z g_{\overline{z}}+ G_6 K_z \overline{g}_{\overline{z}}+ G_7 K_{\overline{z}} g_z + G_8 K_{\overline{z}} \overline{g}_z,
\end{equation}
here $G_i:= G_i(K)$ where, for $P_{(m_0,m_1,m_2, m_3,m_4)}= \displaystyle\sum_{i=0}^{i=4} m_i\vert g\vert^{2i}$,
\begin{eqnarray*}
G_1(x)&=&\dfrac{2 \overline{g} (B^6 x^2 +
   2 B^2 x (P_{(0,4,2,1,1)} \mu_1^2 + P_{(1,-5,1,3,0)} \mu_1 \mu_3 +
      P_{(0,5,3,0,0)} \mu_3^2) -32 \vert g \vert^4 (B-2) \mu_1\mu_3^3 )}{B^3 (B^4 x^2 + \mu_1^2 ((B-2)^2 \mu_1 +
      4 \vert g\vert^2 \mu_3)^2 +
   2 x (P_{(1,0,6,0,1)} \mu_1^2 +
      P_{(0,4,-8,4,0)} \mu_1 \mu_3 + 8\vert g\vert^4 \mu_3^2))}\\[15pt]
& &- \dfrac{2\overline{g}
       \mu_1 ((1
        +6 \vert g \vert^2 -
            \vert g \vert^4 P_{(13,-10,3,0,1)}) \mu_1^3 +
      2 (B-2) P_{(-1,-6,18,2,3)} \mu_1^2 \mu_3 +
      P_{(0,2,66,-18,14)} \mu_1 \mu_3^2)}{B^3 (B^4 x^2 + \mu_1^2 ((B-2)^2 \mu_1 +
      4 \vert g\vert^2 \mu_3)^2 +
   2 x (P_{(1,0,6,0,1)} \mu_1^2 +
      P_{(0,4,-8,4,0)} \mu_1 \mu_3 + 8\vert g\vert^4 \mu_3^2))}
\end{eqnarray*}
\begin{eqnarray*}
G_2(x)&=&\dfrac{2 i g (B-2) (\mu_3 - \mu_1) (2 B^4 x^2 + B^4 x \mu_1^2
-iB^2\sqrt{x}(x+ \mu_1^2)P_{(\mu_1, 2\mu_3,\mu_1,0,0)}
 - \mu_1^2 P_{(\mu_1,4\mu_3-2\mu_1, \mu_1,0,0)}^2)}{B^3 \sqrt{x} (x + \mu_1^2) \left(B^4 x +  P_{(\mu_1,4\mu_3-2\mu_1, \mu_1,0,0)}^2\right)}
\end{eqnarray*}
\begin{eqnarray*}
G_3(x)&=&\dfrac{2 i g (B-2) (\mu_1 - \mu_3) (2 B^4 x^2 + B^4 x \mu_1^2 +
i \sqrt{x} B^2(x+\mu_1^2)P_{(\mu_1, 2\mu_3,\mu_1,0,0)}
 - \mu_1^2 P_{(\mu_1,4\mu_3-2\mu_1, \mu_1,0,0)}^2)}{B^3 \sqrt{x} (x + \mu_1^2) \left(B^4 x +  P_{(\mu_1,4\mu_3-2\mu_1, \mu_1,0,0)}^2\right)}
\end{eqnarray*}
\begin{eqnarray*}
G_4(x)&=&-\dfrac{4 g^3 (B-2) (\mu_1 - \mu_3)^2 \left(-3 B^2 x + P_{(\mu_1^2, 8\mu_1\mu_3-6\mu_1^2, \mu_1^2,0,0)}\right)}{B^3 (x + \mu_1^2) \left(B^4 x +  P_{(\mu_1,4\mu_3-2\mu_1, \mu_1,0,0)}^2\right)}
\\[10pt]
G_5(x)&=&\overline{G}_7(x)=\dfrac{B^2 x + \mu_1 P_{(\mu_1,4\mu_3-2\mu_1, \mu_1,0,0)}}{4 x (\sqrt{x} + i\mu_1) \left(B^2 \sqrt{x} + i P_{(\mu_1,4\mu_3-2\mu_1, \mu_1,0,0)}\right)}
\\[10pt]
G_6(x)&=&\overline{G}_8(x)=-\dfrac{i g^2 (\mu_1 - \mu_3)}{\sqrt{x} (\sqrt{x} + i\mu_1) \left(B^2 \sqrt{x} + i P_{(\mu_1,4\mu_3-2\mu_1, \mu_1,0,0)}\right)}\ .
\end{eqnarray*}

Moreover, the integrability equation \eqref{e8} is equivalent to the equation \eqref{e13}.
\end{theorem}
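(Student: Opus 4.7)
The plan is to derive equation \eqref{e13} directly from the integrability condition \eqref{e8bis} by inserting the explicit expressions for $a_1, a_2, a_3$ obtained in Proposition \ref{p2} (specialized to $\mu_1 = \mu_2$) and then isolating the unique second-order term $g_{z\bar z}$.

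First I would write out the vector $V \equiv (v_1, v_2, v_3)$ of \eqref{e8bis} using the connection coefficients \eqref{simbolos} and the stereographic formula \eqref{e3} for $N^e$ in terms of $g$. Each $v_k$ is a combination of $(N_k)_{z\bar z}$, products of the form $(N_j)_z\,\overline{a_i}$ and $(N_j)_{\bar z}\,a_l$, and lower-order terms, with real constant coefficients coming from the $\Gamma_{ij}^k$. The key observation is that after substituting the $a_i$ from Proposition \ref{p2}, the only genuine second-order quantities appearing in $V$ are $g_{z\bar z}$ and $\bar g_{z\bar z}$; every other piece depends only on $g,\bar g$, their first derivatives, $K$, and the first derivatives $K_z, K_{\bar z}$ (which enter through differentiating the $1/\sqrt{K}$ factor).

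The integrability condition $N^e \parallel \mathfrak{Re}(V)$ reads $N^e \times \mathfrak{Re}(V) = 0$. Using \eqref{e3}, this vector identity collapses to a single complex scalar equation in which $g_{z\bar z}$ appears linearly, with coefficient proportional to the discriminant already computed in the proof of Proposition \ref{p2} and therefore nowhere vanishing. Solving this linear relation for $g_{z\bar z}$ and grouping the remaining terms according to the eight monomials
\[
g_z g_{\bar z},\ g_z \bar g_{\bar z},\ \bar g_z g_{\bar z},\ \bar g_z \bar g_{\bar z},\ K_z g_{\bar z},\ K_z \bar g_{\bar z},\ K_{\bar z} g_z,\ K_{\bar z} \bar g_z
\]
produces the coefficients $G_1,\dots,G_8$ of the statement, and the identities $G_5 = \overline{G}_7$, $G_6 = \overline{G}_8$ arise naturally from the reality of $\mathfrak{Re}(V)$. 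This establishes that every immersion with the stated properties satisfies \eqref{e13}, and since the derivation is a nondegenerate linear manipulation of \eqref{e8bis} (the coefficient of $g_{z\bar z}$ being nonzero), it is reversible; hence \eqref{e8} and \eqref{e13} carry exactly the same information.

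The main obstacle is entirely computational: the polynomials defining the $G_i$ are of high degree in $|g|^2$, and the bookkeeping required to collect terms correctly is delicate, so the help of a symbolic algebra system is essentially indispensable. A minor subtlety to keep in mind is that the nonvanishing of the prefactor of $g_{z\bar z}$ is precisely the nondegeneracy condition already verified in Proposition \ref{p2}, so no additional case distinction is needed.
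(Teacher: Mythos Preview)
Your approach is essentially the one the paper uses: substitute the $a_i$ from Proposition~\ref{p2} into the integrability condition, reduce the vector identity $N^e\parallel\mathfrak{Re}(V)$ to a complex scalar equation via \eqref{e10}, and solve for $g_{z\bar z}$. One imprecision is worth flagging: the single complex equation you obtain contains \emph{both} $g_{z\bar z}$ and $\bar g_{z\bar z}$ (since $\mathfrak{Re}(V)$ involves $V$ and $\bar V$), so you cannot solve for $g_{z\bar z}$ directly from it. The paper pairs that equation with its conjugate to get a $2\times 2$ linear system in $\{g_{z\bar z},\bar g_{z\bar z}\}$ and checks that the determinant of this system---not merely the coefficient of $g_{z\bar z}$---is nonzero; they even write this discriminant down explicitly. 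The reversibility of the argument likewise rests on that $2\times 2$ determinant rather than on the single coefficient you mention. Once this is said, the two proofs coincide.
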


\begin{proof}
First, once we have the immersion from $\Sigma$ to $M$ the vector field   $U=(u_1, u_2, u_3)=i \varphi^{-1} [\varphi_z, \varphi_{\overline{z}}]$  in the Lie algebra $\mathfrak{m}$ of $M$ is a null vector field, in particular, satisfies trivially  the complex equation $u_1+i u_2+g u_3=0$. Then, using this equation and its conjugate we have a system in the unknowns $\{g_{z\overline{z}}, \overline{g}_{z\overline{z}}\}$,  whose discriminant 
\[-\dfrac{16 B^2 K}{ (B^4 K^2 + \mu_1^2 ((B-2)^2 \mu_1 +   4 \vert g\vert^2 \mu_3)^2 + 2 K (P_{(1,0,6,0,1)} \mu_1^2 + 
 P_{(0,4,-8,4,0)} \mu_1 \mu_3 + 8\vert g\vert^4 \mu_3^2))}\]
is different from zero.  After a tedious computation, we can solve the system obtaining equation \eqref{e13}.  So we proved that if $\varphi^{-1} [\varphi_z, \varphi_{\overline{z}}]=(0,0,0)$, then $g$ satisfies equation \eqref{e13}.

Reciprocally, observe that it is easy to check that a real vector $w\equiv(w_1,w_2,w_3)\in \mathfrak{m}$ satisfies that
\begin{equation}\label{e10}
w\text{ is parallel to }N^e\text{ if, and only if, }(|g|^2-1)(w_1+i\,w_2)-2g\,w_3=0.
\end{equation}

Hence, if we consider $w\equiv\mathfrak{Re}(V)$ in (\ref{e8bis}) then we obtain a unique complex equation in the unknowns $\{g_{z\overline{z}}, \overline{g}_{z \overline{z}}\}$. 
A direct computation shows that if $g$ satisfies \eqref{e13}, then $g$ solves equation \eqref{e10}.     
\end{proof}

From Theorem \ref{teo1}, we note that Proposition \ref{p2} and Theorem \ref{t1} also give the sufficient conditions for a map $N^e$ and a positive function $K$ to be the Gauss map and extrinsic curvature of an immersion $\varphi:\Sigma\fl M$, for the simply connected Riemann surface $\Sigma$.

Though the previous equations look like difficult to handle, they take an easier form when we fix $M$. Thus, we will particularize for the 3-sphere and obtain some consequences.

\subsection{The canonical Sphere}\label{CS}
In this section, we focus our attention on the canonical sphere $\mathbb{S}^3$. From the classification of unimodular Lie groups, $\mu_1=\mu_2=\mu_3=1$.

We consider a left invariant orthonormal vector field  $E_i(q),$ $i=1,2,3,$ in $\s^3$ such that $E_i(e)=E_i$, where $E_i$ satisfies \eqref{e1} and  $e$ is the identity element of $\mathbb{S}^3$.

Let $\psi:\Sigma\fl \s^3$ be an isometric immersion in $\s^3$. As before, we denote by
$$
a_i(p)=\langle \psi_z(p), E_i(\psi(p))\rangle,\qquad i=1,2,3.
$$

Then, as a consequence of Proposition \ref{p2} and Corollary \ref{c3}, we have
\begin{corollary}\label{c1}
Let $\psi:\Sigma\fl \s^3$ be an isometric immersion in $\s^3$ with positive extrinsic curvature $K$ and Gauss map $g:\Sigma\fl \mathbb{C}\cup\{\infty\}$. Let $z$ be a conformal parameter for the second fundamental form, then we have
\begin{equation}
\label{e19}
\begin{array}{ccl}
a_1&=&\dfrac{(1- \overline{g}^2 )g_z + ( g^2-1) \overline{g}_z}{(1 + \vert g\vert^2)^2 ( \sqrt{K}-i)} \\[15pt]
a_2&=&-\dfrac{i ((1 + \overline{g}^2) g_z + (1 + g^2) \overline{g}_z)}{(1 + \vert g\vert^2)^2 ( \sqrt{K}-i))} \\[15pt]
a_3&=&\dfrac{2 (\overline{g} g_z - g \overline{g}_z)}{(1 + \vert g\vert^2)^2 ( \sqrt{K}-i))}
\end{array}
\end{equation}

Moreover, the first and second fundamental forms of the immersions are given by
\begin{eqnarray*}
&I=-\dfrac{4 g_z \overline{g}_z}{(1 + \vert g \vert^2)^2 (-i + \sqrt{K})^2} \ dz^2 + \dfrac{2 (\vert g_z\vert^2 +\vert \overline{g}_z\vert^2)}{(1 + \vert g\vert^2 )^2 (1 + K)} \ \vert dz\vert^2-\dfrac{4 g_{\overline{z}} \overline{g}_{\overline{z}}}{(1 + \vert g \vert^2)^2 (i + \sqrt{K})^2}\  d\overline{z}^2,  \\
&II= 2 \,\dfrac{\sqrt{K} \ (\vert g_{\overline{z}}\vert^2- \vert  g_z\vert^2 )}{(1 + \vert g\vert^2 )^2 (1 + K)}\ \vert dz\vert^2.
\end{eqnarray*}
\end{corollary}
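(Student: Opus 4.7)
The proof is a direct specialization of Proposition \ref{p2} and Corollary \ref{c3} to the case $\mu_1=\mu_2=\mu_3=1$ of the canonical sphere, followed by systematic algebraic reduction using a handful of elementary identities. No new idea is involved; the only difficulty is bookkeeping.

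First I would attack the common denominator from Proposition \ref{p2}. With $\mu_i=1$ the inner expression becomes
\[
B^2K-\widetilde{C}^2+C^2-(B-2)^2-i\sqrt{K}\bigl(|A|^2+|A+2|^2+4|g|^2\bigr).
\]
Three elementary identities collapse it: $C^2-\widetilde{C}^2=-4|g|^2$, $(B-2)^2+4|g|^2=B^2$, and $|A|^2+|A+2|^2+4|g|^2=2B^2$. Together they reduce the denominator to $B^3(K-1-2i\sqrt{K})=B^3(\sqrt{K}-i)^2$. For the numerators of $a_1,a_2,a_3$, the analogous identity
\[
(A+2)(B-2)+2gC=(g^2-1)(|g|^2+1)=AB,
\]
together with its conjugate analogue and the observation that $L=2(1+i\sqrt{K})=2i(\sqrt{K}-i)$, extracts a factor $B(\sqrt{K}-i)$ from each bracket. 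In the expression for $a_3$ the first summand in the numerator of Proposition \ref{p2} vanishes outright because $\mu_1=\mu_2$. Cancelling $B(\sqrt{K}-i)$ against the denominator $B^3(\sqrt{K}-i)^2$ delivers formulas \eqref{e19}.

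For the first and second fundamental forms I would apply Corollary \ref{c3} with $\mu_1=\mu_3=1$. The essential simplification is that
\[
R_1=A^2\mu_1-(A+2)^2\mu_1+4g^2\mu_3=4g^2(\mu_3-\mu_1)=0,
\]
which wipes out many terms at a stroke. Direct computation then gives $R_2=2B^2$, $R_3=2B^4(K+1)$ and $R_4=B^2(K+1)$. Substituting these values into the Corollary \ref{c3} expressions and cancelling $B^6(\sqrt{K}-i)^4$ (for $E$) and $B^6(K+1)^2$ (for $F$) yields the stated first fundamental form. Finally, $D=|E|^2-F^2$ reduces to $-4(|g_z|^2-|\overline{g}_z|^2)^2/(B^4(K+1)^2)$, and the relation $II=\sqrt{-DK}\,|dz|^2$ from Corollary \ref{c3} produces the claimed second fundamental form; the sign $|g_{\overline{z}}|^2-|g_z|^2$ is fixed by requiring $II$ to be positive definite for the orientation of $N$ chosen at the start of Section 2.

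The main obstacle is not conceptual but purely computational: Proposition \ref{p2} and Corollary \ref{c3} are heavy expressions, so one has to keep the identities on $A,B,C,\widetilde{C}$ ready and execute the cancellations carefully. Once the factorization $\text{denominator}=B^3(\sqrt{K}-i)^2$ is isolated and $R_1=0$ is noted, everything else is routine simplification.
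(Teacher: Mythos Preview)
Your proposal is correct and follows exactly the route indicated in the paper: the corollary is stated there simply ``as a consequence of Proposition~\ref{p2} and Corollary~\ref{c3}'', and what you do is precisely that specialization to $\mu_1=\mu_2=\mu_3=1$ together with the algebraic reductions via the $A,B,C,\widetilde{C}$ identities. Your sketch in fact gives more detail than the paper, which offers none beyond naming the two results being specialized.
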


On the other hand, the integrability condition $[\psi_z,\psi_{\overline{z}}]=0$ gives us how the extrinsic curvature can be recovered in terms of the Gauss map,
\begin{proposition}
\label{p3}
Let $\psi:\Sigma\fl\s^3$ be an isometric immersion in  $\s^3$. Assume that $\Sigma$ has positive extrinsic curvature $K$ and Gauss map
$g: \Sigma \fl\mathbb{C}\cup\{\infty\}$.  Let $z$ be a conformal parameter for the second fundamental form. Then, $ \vert g_{\overline{z}}\vert^2-\vert g_z\vert^2>0$ and the equation $[\psi_z,\psi_{\overline{z}}]=0$ is equivalent to
\begin{equation}
\label{e16}
 \left( \log\left(\dfrac{1-K-2 i \sqrt{K}}{\sqrt{K}}\right)\right)_z  =\dfrac{2}{ \vert g_{\overline{z}}\vert^2-\vert g_z\vert^2}\left(\overline{g}_z g_{z\overline{z}}- g_z \overline{g}_{z\overline{z}}+2g_z \overline{g}_z\, \dfrac{ g \overline{g}_{\overline{z}}-\overline{g} g_{\overline{z}}  }{1 + \vert g \vert^2}\right).
\end{equation}
\end{proposition}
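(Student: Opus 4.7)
The first assertion, $|g_{\overline z}|^2 > |g_z|^2$, is an immediate consequence of Corollary \ref{c1}: the formula
$$II = 2\,\frac{\sqrt K\,(|g_{\overline z}|^2 - |g_z|^2)}{(1+|g|^2)^2(1+K)}\,|dz|^2$$
gives $II$ as a positive definite form by our choice of normal (so that $\rho>0$ in Section 2), while $K$, $1+K$ and $(1+|g|^2)^2$ are all strictly positive, forcing $|g_{\overline z}|^2 - |g_z|^2 > 0$.

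For the second assertion, the plan is to substitute the explicit expressions \eqref{e19} for $a_1,a_2,a_3$ directly into the integrability condition. Using the Maurer--Cartan equation together with the $\s^3$ structure constants ($c^1_{23}=c^2_{31}=c^3_{12}=1$), the condition $\psi^{-1}[\psi_z,\psi_{\overline z}]=0$ becomes the scalar system
$$(\overline{a_l})_z - (a_l)_{\overline z} + \sum_{i,j} a_i\,\overline{a_j}\,c^l_{ij} = 0, \qquad l=1,2,3.$$
As in the proof of Theorem \ref{t1}, the three resulting scalar equations are not independent: the complex combination $u_1+iu_2+gu_3=0$ (with $U:=i\,\psi^{-1}[\psi_z,\psi_{\overline z}]$) reflects the parallelism of $U$ with $N^e$ and, when combined with \eqref{e19}, produces the $g$-evolution equation \eqref{e13}. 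The remaining independent scalar condition (equivalently, $u_3=0$) is what will yield \eqref{e16}.

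The substitution naturally splits into three structural pieces that assemble \eqref{e16}. First, differentiating the factor $(\sqrt K - i)^{-1}$ in each $a_l$ (and $(\sqrt K + i)^{-1}$ in $\overline{a_l}$) with respect to $z$ produces the logarithmic expression on the left-hand side of \eqref{e16}; this identification rests on the factorization $1-K-2i\sqrt K = (1-i\sqrt K)^2$ together with the elementary identity $(\sqrt K - i)(\sqrt K + i) = 1+K$. Second, differentiating $(1+|g|^2)^{-2}$ contributes the conformal-factor term $2g_z\overline g_z(g\,\overline g_{\overline z}-\overline g\, g_{\overline z})/(1+|g|^2)$. Third, differentiating the numerators of the $a_l$ yields the Wronskian-type combination $\overline g_z g_{z\overline z} - g_z\overline g_{z\overline z}$, while the factor $|g_{\overline z}|^2 - |g_z|^2$ (positive by the first assertion) appears in the denominator upon isolating $K_z$.

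The main obstacle is the sheer algebraic bulk: the substitution generates many cross-terms in products of first derivatives of $g$, and these must cancel before \eqref{e16} emerges in its compact form. The observation that $a_3$ is already proportional to $\overline g g_z - g\overline g_z$, whose $\overline z$-analogue appears in the last summand of \eqref{e16}, should streamline the cancellation; and a careful bookkeeping of the complex factors $\sqrt K \pm i$ is essential to recognize the LHS in its stated logarithmic form rather than as a more cumbersome rational expression.
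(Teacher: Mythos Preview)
Your overall strategy---substitute \eqref{e19} into the Maurer--Cartan form of the integrability condition and read off \eqref{e16}---is exactly the paper's approach, and your treatment of the positivity $|g_{\overline z}|^2>|g_z|^2$ is correct. The structural observations (the factorisation $1-K-2i\sqrt K=(1-i\sqrt K)^2$, the appearance of $(1+|g|^2)^{-2}$ producing the conformal term, the Wronskian combination from the numerators) are also right and will guide the algebra.

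There is, however, a genuine misidentification in your plan. You assert that the complex combination $u_1+iu_2+gu_3=0$ encodes parallelism of $U$ with $N^e$ and yields \eqref{e13}, while the ``remaining'' scalar $u_3=0$ yields \eqref{e16}. Neither claim is correct. First, $u_1+iu_2+gu_3=0$ is \emph{not} the parallelism condition (that would be $(|g|^2-1)(u_1+iu_2)-2g\,u_3=0$, cf.~\eqref{e10}); for real $U$ it forces $U$ parallel to $(-\mathrm{Re}\,g,-\mathrm{Im}\,g,1)$, which is neither parallel nor orthogonal to $N^e$. Second---and this is the practical obstruction---the paper derives \eqref{e16} from the \emph{same} combination $V_1+iV_2+gV_3=0$ that (in Theorem~\ref{t1}) gives \eqref{e13}: one simply solves that complex equation together with its conjugate as a $2\times 2$ linear system in the unknowns $\tfrac{(-i+\sqrt K)}{K(i+\sqrt K)}K_z$ and its conjugate (nonzero discriminant $(1+|g|^2)^3(|g_z|^2-|g_{\overline z}|^2)$), obtaining \eqref{e16} via the identity \eqref{e27}. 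By contrast, $u_3=0$ is a single \emph{real} equation and cannot by itself produce the complex equation \eqref{e16}. If you execute your plan as stated and compute only the third component, you will not arrive at \eqref{e16}; you need the full complex combination $V_1+iV_2+gV_3$ (equivalently, the intermediate equation \eqref{e24}).
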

\begin{proof}
Since the second fundamental form of the immersion is positive definite, we have from Corollary \ref{c1} that $ \vert g_{\overline{z}}\vert^2-\vert g_z\vert^2$ must be positive.

On the other hand,
$$
\nabla_{\psi_z} \psi_{\overline{z}}=\sum_{i}\overline{a_i}_z\,E_i\circ\varphi+\sum_{i,j,k}a_i\overline{a_j}\Gamma_{ij}^k\, E_k\circ\varphi.
$$
Moreover, from \eqref{simbolos}, the Christoffel symbol $\Gamma_{ij}^k=\pm 1$ when $\{i,j,k\}=\{1,2,3\}$ and vanishes otherwise. So,
$$
0=\psi^{-1}\ [\psi_z,\psi_{\oz}]\equiv(\overline{a_1}_z-{a_1}_{\oz}+2a_2\overline{a_3}-2a_3\overline{a_2},\
\overline{a_2}_z-{a_2}_{\oz}+2a_3\overline{a_1}-2a_1\overline{a_3},\
\overline{a_3}_z-{a_3}_{\oz}+2a_1\overline{a_2}-2a_2\overline{a_1}).
$$

If we denote by $V=(V_1, V_2, V_3)$ the previous vector and use \eqref{e19}, then the equality  $V_1+iV_2+g V_3=0$ is equivalent to
\begin{equation}
\label{e24}
 d_1\,\dfrac{(-i + \sqrt{K})}{K(i+ \sqrt{K})}  K_z+ d_2\,\dfrac{(i+\sqrt{K})}{K(-i+\sqrt{K})} K_{\overline{z}}+ d_3=0,
\end{equation}
  where
  \begin{eqnarray*}
  d_1 &=&  (1 + \vert g\vert^2) g_{\overline{z}}\\
d_2 &=&(1 + \vert g\vert^2) g_{z}  \\
d_3 &=& 8 \overline{g}\, g_{\overline{z}}\, g_z - 4 g_{z\overline{z}}(1+\vert g\vert^2)
\end{eqnarray*}

Thus, taking equation \eqref{e24} and its conjugated, we obtain a system of linear equations in the unknowns $\dfrac{(-i + \sqrt{K})}{K(i+ \sqrt{K})}  K_z$,
$\dfrac{(i+\sqrt{K})}{K(-i+\sqrt{K})} K_{\overline{z}}$. Bearing in mind that
\begin{equation}
\label{e27}
 2\left( \log\left(\dfrac{1-K-2 i \sqrt{K}}{\sqrt{K}}\right)\right)_z=\dfrac{(-i + \sqrt{K})}{K(i+ \sqrt{K})}  K_z,
\end{equation}
and that the discriminant of the system is $ (1 + \vert g \vert^2)^3 (\vert g_z\vert^2 - \vert g_{\overline{z}} \vert^2)$, which is different from zero, we obtain (\ref{e16}).

Finally, it is easy to show that if \eqref{e16} is satisfied then the vector $V$ vanishes identically, that is, $[\psi_z,\psi_{\overline{z}}]=0$.
\end{proof}

As an interesting consequence we have:
\begin{corollary} 
Let $\psi:\Sigma\fl \s^3$ be an isometric immersion in  $\s^3$, with positive extrinsic curvature $K$, and Gauss map
$g: \Sigma\fl\mathbb{C}\cup\{\infty\}$. Then,  $K$ is a positive constant if, and only if, $g$ is a harmonic map for the conformal structure induced by the second fundamental form.
\end{corollary}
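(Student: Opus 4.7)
The plan is to read Proposition \ref{p3} as directly giving the equivalence, once one recognizes that the bracket on the right-hand side of \eqref{e16} coincides, up to the positive factor $|g_{\oz}|^2-|g_z|^2$, with $2(\bar g_z T-g_z\bar T)$, where
\[
T:=g_{z\oz}-\frac{2\bar g}{1+|g|^2}\,g_z\,g_{\oz}
\]
is the tension field of $g:\Sigma\fl\s^2$ for the round metric on the Riemann sphere (whose stereographic expression is $4|dg|^2/(1+|g|^2)^2$). Harmonicity of $g$ with respect to the conformal structure induced by $II$ is exactly $T\equiv 0$. Expanding $\bar g_z T-g_z\bar T$ and using $\bar g_z=\overline{g_{\oz}}$, $\bar g_{\oz}=\overline{g_z}$, the first step is to verify that the quantity $\bar g_z g_{z\oz}-g_z\bar g_{z\oz}+\frac{2g_z\bar g_z(g\bar g_{\oz}-\bar g g_{\oz})}{1+|g|^2}$ inside the brackets of \eqref{e16} equals $\bar g_z T-g_z\bar T$, so that \eqref{e16} becomes
\[
\Bigl(\log\tfrac{1-K-2i\sqrt K}{\sqrt K}\Bigr)_z=\frac{2(\bar g_z T-g_z\bar T)}{|g_{\oz}|^2-|g_z|^2}.
\]

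With this identity at hand, the forward implication is immediate. If $K$ is a positive constant then $K_z=0$, so by \eqref{e27} the left-hand side above vanishes and $\bar g_z T=g_z\bar T$. Taking absolute values (noting $|\bar g_z|=|g_{\oz}|$ and $|\bar T|=|T|$) gives $(|g_{\oz}|-|g_z|)|T|=0$; since positivity of $II$ forces $|g_{\oz}|^2-|g_z|^2>0$ by Proposition \ref{p3}, one concludes $T\equiv 0$, i.e.\ $g$ is harmonic.

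For the converse, if $g$ is harmonic then $T\equiv 0$, so the right-hand side of the identity above vanishes, yielding $(\log F(K))_z=0$ where $F(K)=(1-K-2i\sqrt K)/\sqrt K$. The scalar coefficient of $K_z$ in \eqref{e27}, namely $(-i+\sqrt K)/(K(i+\sqrt K))=(K-1-2i\sqrt K)/(K(1+K))$, never vanishes for $K>0$; hence $K_z\equiv 0$, conjugation gives $K_{\oz}\equiv 0$, and connectedness of $\Sigma$ forces $K$ to be a positive constant. The main obstacle is really just the verification of the algebraic identity relating the bracket in \eqref{e16} to $\bar g_z T-g_z\bar T$ --- a mechanical expansion, but the one spot with enough conjugates floating around to make it the most likely place for a sign-error.
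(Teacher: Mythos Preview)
Your proof is correct and follows essentially the same route as the paper. Both arguments rest on \eqref{e16} and \eqref{e27}: the paper observes that $K$ constant is equivalent to the bracket in \eqref{e16} vanishing, then treats that equation together with its conjugate as a linear system in $g_{z\oz},\overline g_{z\oz}$ (with determinant $|g_{\oz}|^2-|g_z|^2\neq 0$) to obtain $g_{z\oz}-\frac{2\overline g}{1+|g|^2}g_z g_{\oz}=0$; you instead rewrite the bracket as $\overline g_z T-g_z\overline T$ and use the absolute-value trick, which is just a repackaging of the same linear-algebra step.
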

\begin{proof}
From (\ref{e16}) and (\ref{e27}), $K$ is constant if, and only if, $\overline{g}_z g_{z\overline{z}}- g_z \overline{g}_{z\overline{z}}+2g_z \overline{g}_z\, \dfrac{ g \overline{g}_{\overline{z}}-\overline{g} g_{\overline{z}}  }{1 + \vert g \vert^2}=0$. Thus, considering this equation and its conjugate in the unknowns $g_{z\oz}$ and $\overline{g}_{z\oz}$, $K$ to be constant  is equivalent to
$$
g_{z\oz}-2g_z g_{\oz} \frac{\overline{g}}{1+|g|^2}=0,
$$
as we wanted to show.
\end{proof}

From Theorem \ref{teo1} and Proposition \ref{p3}, we obtain necessary and sufficient conditions for the existence of an immersion with positive extrinsic curvature in $\s^3$ in terms of its Gauss map and the conformal structure of the second fundamental form. More concretely:
\begin{corollary}\label{t2}
Let $\Sigma$ be a simply connected Riemann surface and  $g:\Sigma \fl \mathbb{C}\cup\{\infty\}$ a differentiable map.
Then, there exists an isometric immersion $\psi:\Sigma\fl\s^3$ with Gauss map $g$ and such that the conformal structure induced by the second fundamental form is that of $\Sigma$ if, and only if,
\begin{equation}
\label{e17}
\vert g_{\overline{z}}\vert^2-\vert g_z\vert^2>0
\end{equation}
and there exists a positive function $K$ solving the equation \eqref{e16}.

Moreover, $\psi$ is unique up to left translations.
\end{corollary}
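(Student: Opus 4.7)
The plan is to derive both directions of the equivalence as consequences of the results already proved in this subsection, applied to the special case $\mu_1 = \mu_2 = \mu_3 = 1$. The necessity part is a direct reading of Corollary \ref{c1} and Proposition \ref{p3}; the sufficiency part reduces to checking that the hypotheses of Theorem \ref{teo1} are satisfied when we feed it the data $(g,K)$.

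For necessity, assume $\psi:\Sigma\fl\s^3$ is an isometric immersion with positive extrinsic curvature $K$, Gauss map $g$, and conformal structure of $\Sigma$ coinciding with that induced by the second fundamental form. The expression for $II$ in Corollary \ref{c1} reads
$$
II = \frac{2\sqrt{K}\,(|g_{\oz}|^2 - |g_z|^2)}{(1+|g|^2)^2(1+K)}\,|dz|^2,
$$
so positive definiteness of $II$ together with $K>0$ forces $|g_{\oz}|^2-|g_z|^2>0$, which is (\ref{e17}). Since $\psi$ is an immersion, the integrability condition $[\psi_z,\psi_{\oz}]=0$ holds automatically, and Proposition \ref{p3} identifies this condition with (\ref{e16}); hence the extrinsic curvature $K$ is the positive function required.

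For sufficiency, let $g$ and $K$ be the prescribed data satisfying (\ref{e17}) and (\ref{e16}). Define $N^e:\Sigma\fl\s^2\subseteq\mathfrak{m}$ by (\ref{e3}) and $(a_1,a_2,a_3)$ by the explicit formulas (\ref{e19}) of Corollary \ref{c1}. These formulas were obtained by solving the linear system (\ref{e11}) in the unknowns $a_i$ under the specialisation $\mu_1=\mu_2=\mu_3=1$, and the associated discriminant (computed in the proof of Proposition \ref{p2}) is a nonvanishing multiple of $\sqrt{K}-i$; thus $(a_1,a_2,a_3)$ is the unique solution of (\ref{e11}) for the given $(g,K)$. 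It remains to verify the integrability condition (\ref{e8bis}). In the proof of Proposition \ref{p3} it is shown that, after substituting (\ref{e19}), the complex equation $V_1+iV_2+gV_3=0$ arising from parallelism of $V$ with $N^e$ is equivalent to (\ref{e16}); since (\ref{e8bis}) is precisely this parallelism condition translated to the identity of $\s^3$, our hypothesis that $K$ solves (\ref{e16}) guarantees (\ref{e8bis}). Theorem \ref{teo1} then produces a unique (up to left translation) immersion $\psi:\Sigma\fl\s^3$ having $g$ as its Gauss map, $K$ as its extrinsic curvature, and the prescribed conformal structure as that of its second fundamental form.

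The only delicate bookkeeping point, which I anticipate as the main verification, is that the algebraic manipulations in Corollary \ref{c1} and Proposition \ref{p3} must be read as genuine two-way equivalences: the $a_i$ in (\ref{e19}) were derived under the assumption that an immersion exists, whereas in the sufficiency argument they are used as a definition. One must confirm that (\ref{e19}) is equivalent to the linear system (\ref{e11}) in $\s^3$, and that the vanishing of $V_1+iV_2+gV_3$ in Proposition \ref{p3} is equivalent to (\ref{e10}) applied to $\mathfrak{Re}(V)$; both of these are algebraic identities already present in the cited proofs, so no new computation is needed. Uniqueness up to left translations is inherited directly from Theorem \ref{teo1}.
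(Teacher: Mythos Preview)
Your argument is correct and is exactly the route the paper takes: the corollary is presented there without proof, merely as a consequence of Theorem \ref{teo1} and Proposition \ref{p3}, and you have spelled out precisely those details. One minor wording issue worth cleaning up: the vector $V$ in the proof of Proposition \ref{p3} is $\psi^{-1}[\psi_z,\psi_{\oz}]$, not the $V$ of (\ref{e8bis}), and what is shown there is that this bracket \emph{vanishes} (not merely that it is parallel to $N^e$); since the proof of Theorem \ref{teo1} already identifies (\ref{e8bis}) with the Frobenius condition $[\varphi_z,\varphi_{\oz}]=0$, your chain of implications still closes as intended.
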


It is well known that there exists a correspondence between isometric simply connected surfaces having constant mean curvature $H_1$ in $\mathbb{R}^3$ and constant mean curvature $H_2$ in $\s^3$, with $H_1^2=H_2^2+1$. On the other hand given a surface $\Sigma$ having positive constant mean curvature in $\mathbb{R}^3$ or constant mean curvature in $\s^3$, there exists a surface $\Sigma^\prime$  parallel to $\Sigma$ having positive constant extrinsic curvature (probably, with singularities). As a consequence, one can find a correspondence between simply connected surfaces having positive constant extrinsic curvature in $\mathbb{R}^3$ and simply connected surfaces having constant extrinsic curvature in $\s^3$. The following result is a generalization of this correspondence.

\begin{proposition}\label{pp1}
Let $\psi:\Sigma\fl\s^3$ be an immersion having positive extrinsic  curvature $K$ and Gauss map $g$. Assume
\begin{equation}
\label{e20} \left(\dfrac{K_z}{2 \sqrt{K}(1+ K)}\right)_{\overline{z}}=0.
\end{equation}
Then, there exists an immersion $\psi^{\ast}:\Sigma\fl\real^3$ having the same Gauss map $g$, the same conformal structure for the second fundamental form, and positive extrinsic curvature $K^{\ast}$, where $K^{\ast}$ is a solution of
\begin{equation}
\label{e21}
(\log K^{\ast})_{z}=\left( \log\left( \dfrac{(i+ \sqrt{K})^4}{K}\right)\right)_z.
\end{equation}

Conversely, let $\psi^{\ast}:\Sigma\fl\real^3$ be an immersion having Gauss map $g$ and positive extrinsic curvature $K^{\ast}$.  If there exists a positive function $K:\Sigma\fl \real$, such that \eqref{e21} is satisfied, then, there is an isometric immersion $\psi:\Sigma\fl\s^3$ having positive extrinsic curvature $K$, the same Gauss map $g$, and the same conformal structure for the second fundamental form.
\end{proposition}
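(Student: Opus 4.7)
The approach is to reduce the statement, via Corollary \ref{t2} and its counterpart for $\R^3$ (see \cite{GM}), to a comparison of the scalar integrability equations for $\s^3$ and $\R^3$ in terms of the Gauss map. The strategy has three steps: derive the $\R^3$ version of \eqref{e16}, show that \eqref{e21} transforms it into \eqref{e16}, and check that \eqref{e20} is precisely the obstruction to solving \eqref{e21} for a real positive $K^{*}$.

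First, I would derive the $\R^3$ analogue of Proposition \ref{p3}. Specializing Proposition \ref{p1} to $M=\R^3$, where the cross product does not depend on the base point, the integrability equation \eqref{e8} asserts that
\[
\left(\frac{1}{\sqrt{K^{*}}}N_z\right)_{\!\overline{z}}+\left(\frac{1}{\sqrt{K^{*}}}N_{\overline{z}}\right)_{\!z}\ \text{is parallel to}\ N.
\]
Using the characterization \eqref{e10} with $g$ in place of $N^{e}$ via \eqref{e3}, the computation parallel to the proof of Proposition \ref{p3} (now with $\mu_1=\mu_2=\mu_3=0$) forces $|g_{\overline{z}}|^2-|g_z|^2>0$ and yields the $\R^3$ integrability equation
\[
(\log K^{*})_z=\frac{4}{|g_{\overline{z}}|^2-|g_z|^2}\left(\overline{g}_z g_{z\overline{z}}-g_z\overline{g}_{z\overline{z}}+2g_z\overline{g}_z\,\frac{g\overline{g}_{\overline{z}}-\overline{g}g_{\overline{z}}}{1+|g|^2}\right).
\]

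Second, I would compare this with \eqref{e16}. Since $1-K-2i\sqrt{K}=-(i+\sqrt{K})^2$, multiplying \eqref{e16} by $2$ rewrites it as
\[
\left(\log\frac{(i+\sqrt{K})^4}{K}\right)_z=\frac{4}{|g_{\overline{z}}|^2-|g_z|^2}\left(\overline{g}_z g_{z\overline{z}}-g_z\overline{g}_{z\overline{z}}+2g_z\overline{g}_z\,\frac{g\overline{g}_{\overline{z}}-\overline{g}g_{\overline{z}}}{1+|g|^2}\right).
\]
Thus \eqref{e16} for $(g,K)$ is equivalent to the $\R^3$ integrability for $(g,K^{*})$ whenever $K$ and $K^{*}$ are related by \eqref{e21}; the positivity condition $|g_{\overline{z}}|^2>|g_z|^2$ depends only on $g$ and so is shared.

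Third, I would analyze when \eqref{e21} admits a real positive solution $K^{*}$ on the simply connected $\Sigma$. Writing $i+\sqrt{K}=\sqrt{1+K}\,\exp\!\big(i(\pi/2-\arctan\sqrt{K})\big)$ gives
\[
\left(\log\frac{(i+\sqrt{K})^4}{K}\right)_z=\big(2\log(1+K)-\log K\big)_z-4i\,(\arctan\sqrt{K})_z.
\]
Calling $F$ this right-hand side, the real $1$-form $F\,dz+\overline{F}\,d\overline{z}$ is closed on $\Sigma$ if and only if $F_{\overline{z}}$ is real. The $\overline{z}$-derivative of the first summand is automatically real, so the condition reduces to $(\arctan\sqrt{K})_{z\overline{z}}=0$; since $(\arctan\sqrt{K})_z=K_z/(2\sqrt{K}(1+K))$, this is exactly \eqref{e20}. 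Whenever it holds, one integrates on the simply connected $\Sigma$ to obtain a positive $K^{*}$, unique up to a positive multiplicative constant.

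Putting the three steps together proves the proposition. For the direct implication, starting from $\psi$, Proposition \ref{p3} yields \eqref{e16}; hypothesis \eqref{e20} allows one to integrate \eqref{e21} for a positive $K^{*}$, and the second step shows $(g,K^{*})$ satisfies the $\R^3$ integrability and $|g_{\overline{z}}|^2>|g_z|^2$, so the $\R^3$ analogue of Corollary \ref{t2} (Theorem \ref{teo1} with $M=\R^3$) produces $\psi^{*}$. The converse is symmetric: the very existence of a positive $K$ satisfying \eqref{e21} forces \eqref{e20}, hence \eqref{e16} holds for $(g,K)$, and Corollary \ref{t2} produces $\psi$. The main technical obstacle is the first step, which amounts to redoing the algebraically lengthy computation of Proposition \ref{p3} with all $\mu_i$ set to zero; the reward is that, once both integrability equations are displayed side by side, the reduction to the single algebraic identity \eqref{e21} is straightforward.
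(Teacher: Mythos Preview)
Your proposal is correct and follows essentially the same route as the paper: both arguments compare the $\s^3$ integrability equation \eqref{e16} with its $\R^3$ analogue $(\log K^*)_z=\Phi$ (the paper quotes this from \cite[Theorem 3]{GM} as conditions \eqref{e22}--\eqref{e23} rather than rederiving it), link the two via \eqref{e21}, and then identify \eqref{e20} as exactly the reality/closedness condition needed to integrate \eqref{e21} for a positive $K^*$. Your polar decomposition $i+\sqrt{K}=\sqrt{1+K}\,e^{i(\pi/2-\arctan\sqrt{K})}$ is just a cleaner packaging of the paper's direct computation that $\mathfrak{Im}\big(((R-i)/(R(i+R))\,2R_z)_{\bar z}\big)=0\Leftrightarrow (R_z/(1+R^2))_{\bar z}=0$ with $R=\sqrt{K}$.
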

\begin{proof}
In \cite[Theorem 3]{GM}, the authors gave a necessary and sufficient condition for the existence of an immersion from $\Sigma$ in $\mathbb{R}^3$ having a given Gauss map and conformal structure for the second fundamental form. So, if we want to obtain an immersion $\psi^{\ast}:\Sigma\fl\real^3$ having the same Gauss map $g$ and the same conformal structure for $II$, then $g$ must satisfy
\begin{equation}
\label{e22}
\vert g_{\overline{z}}\vert^2- \vert g_{z}\vert^2>0;
\end{equation}
and
\begin{equation}
\label{e23}
\mathfrak{Im}\left( \left( \dfrac{4}{\vert g_{\overline{z}}\vert^2- \vert g_{z}\vert^2}\left(\overline{g}_z g_{z\overline{z}}- g_z \overline{g}_{z\overline{z}}+2g_z \overline{g}_z\, \dfrac{ g \overline{g}_{\overline{z}}-\overline{g} g_{\overline{z}}  }{1 + \vert g \vert^2}\right) \right)_{\overline{z}}\right)=0,
\end{equation}
where $\mathfrak{Im}(\cdot)$ denotes the imaginary part of a complex number.

Note that from Proposition \ref{p3} the equation \eqref{e22} is satisfied. Using equation \eqref{e16}, we show that \eqref{e23} is equivalent to \eqref{e20}. In fact, by \eqref{e16} and \eqref{e27}, for $R=\sqrt{K}$, we have
\[
\mathfrak{Im}\left( \left(\dfrac{R-i}{R (i + R)} \ 2  R_z\right)_{\overline{z}}\right)=\mathfrak{Im}\left( \left( \dfrac{4}{\vert g_{\overline{z}}\vert^2- \vert g_{z}\vert^2}\left(\overline{g}_z g_{z\overline{z}}- g_z \overline{g}_{z\overline{z}}+2g_z \overline{g}_z\, \dfrac{ g \overline{g}_{\overline{z}}-\overline{g} g_{\overline{z}}  }{1 + \vert g \vert^2}\right) \right)_{\overline{z}}\right)
\]
\[
\mathfrak{Im}\left( \left(\dfrac{R-i}{R (i + R)} \ 2  R_z\right)_{\overline{z}}\right)=0\ \Longleftrightarrow\
(2 R R_{\overline{z}}R_z)-(1+R^2)R_{z\overline{z}}=0\  \Longleftrightarrow \ \left( \dfrac{R_z}{1+R^2}\right)_{\overline{z}}=0.
\]
The last equality  is equivalent to equation \eqref{e20}.

The converse is a direct consequence of Corollary \ref{t2}.
\end{proof}


\section{Surfaces with negative extrinsic curvature}
This section is devoted to surfaces having negative extrinsic curvature. Let $\Sigma$ be an orientable smooth surface and $\varphi : \Sigma\fl M$ be an isometric immersion in an oriented 3-dimensional Riemannian manifold $M$. Assume that $\varphi$ has negative extrinsic curvature $K$, that is, such that the determinant of its shape operator is negative. Then, the second fundamental form is a Lorentzian metric and $\Sigma$ can be considered as a Lorentz surface. So, there exist local proper null coordinates $(u,v)$, and we can choose $N$, a global unit normal vector field to $\Sigma$ such that the first and second fundamental forms of the immersion can be written as
$$
I:=\langle d \varphi, d \varphi\rangle = \mathbf{E} du^2+ 2\mathbf{F} dudv+\mathbf{G} dv^2,
$$
$$
II:= \langle - d N, d\varphi \rangle=\  2\mathbf{f}dudv,
$$
where $\mathbf{f}$ is a positive function. We will say that the unit vector field $N$ is compatible with such coordinates if $\mathbf{f}$ is a positive function. \\
The extrinsic curvature of the surface $\Sigma$ is given by $K=-\dfrac{\mathbf{f}^2}{\mathbf{D}}$, where $\mathbf{D}= \mathbf{E}\mathbf{G}-\mathbf{F}^2>0$. And, proceeding as in the previous case, we have
\begin{lemma}
Let $\varphi:\Sigma\ \fl\ M$ be an isometric immersion from an orientable  Riemannian surface $\Sigma$ into a 3-dimensional Riemannian manifold $M$. Assume that $\varphi$ has negative  extrinsic curvature $K$. Then, the Weingarten equations are
\begin{equation}\label{n1}
\nabla_{\varphi_u}N=\dfrac{\mathbf{f}}{\mathbf{D}}(\mathbf{F}\varphi_u- \mathbf{E}\varphi_v)
\quad \textnormal{and} \quad
\nabla_{\varphi_v}N=\dfrac{\mathbf{f}}{\mathbf{D}}(-\mathbf{G}\varphi_u+ \mathbf{F}\varphi_v),
\end{equation}
where  $N$ is a unit normal vector field compatible with  local proper null coordinates $(u,v)$ for the second fundamental form. Moreover, 
\begin{equation}\label{n2}
\varphi_u=\dfrac{1}{\sqrt{-K}} N\times_{\varphi}\nabla_{\varphi_u}N\quad\textnormal{and} \quad \varphi_v=-\dfrac{1}{\sqrt{-K}} N\times_{\varphi}\nabla_{\varphi_v}N,
\end{equation}
and the integrability equation $[\varphi_u,\varphi_v]=0$, is equivalent to
\begin{equation}\label{n3}
N\times_{\varphi}\left(\nabla_{\partial_u}\left(\frac{1}{\sqrt{-K}}\nabla_{\partial_{v}} N\right)+\nabla_{\partial_{v}}\left(\frac{1}{\sqrt{-K}}\nabla_{\partial_{u}} N\right)\right)=0.
\end{equation}
\end{lemma}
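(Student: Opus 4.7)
The lemma is the negative--curvature analogue of Proposition \ref{p1}, and I would mirror that argument with the conformal parameter $z$ replaced by proper null coordinates $(u,v)$ for $II$. For the Weingarten equations \eqref{n1}, I would simply write the shape operator $S$ (characterised by $\nabla_X N=-S(X)$) in matrix form relative to the basis $\{\varphi_u,\varphi_v\}$: since the matrices of $I$ and $II$ are the ones displayed in the excerpt, inverting $I$ and multiplying by $II$ gives $S$, and reading off the two columns yields \eqref{n1} immediately.

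For \eqref{n2}, the null coordinate condition is the crucial point. Since $II(\varphi_u,\varphi_u)=0$, one has $\langle\varphi_u,\nabla_{\varphi_u}N\rangle=0$, and $\varphi_u$ is of course also orthogonal to $N$, so $\varphi_u=\alpha\, N\times_\varphi\nabla_{\varphi_u}N$ for some real scalar $\alpha$. To determine $\alpha$ I would take the inner product with $\nabla_{\varphi_v}N$: the left side gives $\langle\varphi_u,\nabla_{\varphi_v}N\rangle=-II(\varphi_u,\varphi_v)=-\mathbf{f}$, while the right side, using the scalar triple product identity together with the pointwise identity $\nabla_{\varphi_u}N\times_\varphi\nabla_{\varphi_v}N=K\,\varphi_u\times_\varphi\varphi_v$ (which follows from $\nabla_{\cdot}N=-S(\cdot)$ and $\det S=K$), equals $\alpha\, K\sqrt{\mathbf{D}}$. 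Using $\sqrt{-K}=\mathbf{f}/\sqrt{\mathbf{D}}$ then yields $\alpha=1/\sqrt{-K}$. The same computation applied to $\varphi_v$ produces the opposite sign because $\nabla_{\varphi_v}N\times_\varphi\nabla_{\varphi_u}N=-K\,\varphi_u\times_\varphi\varphi_v$, giving $\varphi_v=-\tfrac{1}{\sqrt{-K}}N\times_\varphi\nabla_{\varphi_v}N$.

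Finally, for \eqref{n3}, I would substitute \eqref{n2} into $\nabla_{\partial_u}\varphi_v-\nabla_{\partial_v}\varphi_u$ and expand by the product rule, which is legitimate because the fiberwise cross product on $TM$ is a parallel tensor for the Levi-Civita connection. The expansion produces two cross-cross terms involving $\nabla_{\varphi_u}N\times_\varphi\nabla_{\varphi_v}N$ and $\nabla_{\varphi_v}N\times_\varphi\nabla_{\varphi_u}N$, and two ``ambient second-derivative'' terms of the form $N\times_\varphi\nabla_{\partial_\cdot}\bigl(\tfrac{1}{\sqrt{-K}}\nabla_{\partial_\cdot}N\bigr)$. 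Because of the sign discrepancy between the formulas for $\varphi_u$ and $\varphi_v$ in \eqref{n2}, both cross-cross terms come out with the same overall sign and so combine as $-\tfrac{1}{\sqrt{-K}}\bigl(\nabla_{\varphi_u}N\times_\varphi\nabla_{\varphi_v}N+\nabla_{\varphi_v}N\times_\varphi\nabla_{\varphi_u}N\bigr)$, which vanishes by antisymmetry of $\times_\varphi$. What remains is exactly $-N\times_\varphi(\,\cdots\,)$, so the integrability condition $[\varphi_u,\varphi_v]=0$ is literally equivalent to \eqref{n3}. The only place demanding care is the sign bookkeeping --- specifically, recognising that the sign $-1/\sqrt{-K}$ in the expression for $\varphi_v$ is precisely what is needed to make the antisymmetric cancellation happen --- but once that is in hand, the argument is a direct transcription of the one behind Proposition \ref{p1}.
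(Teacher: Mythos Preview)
Your proposal is correct and follows essentially the same route as the paper: the paper merely states ``proceeding as in the previous case'' and relies on the argument of Proposition~\ref{p1}, which is exactly what you transcribe---orthogonality of $\varphi_u$ to $N$ and $\nabla_{\varphi_u}N$ to get the cross-product expression, a triple-product computation against $\nabla_{\varphi_v}N$ to pin down the scalar, and the Leibniz rule for the parallel cross product to reduce $[\varphi_u,\varphi_v]=0$ to \eqref{n3}. Your explicit use of $\nabla_{\varphi_u}N\times_\varphi\nabla_{\varphi_v}N=K\,\varphi_u\times_\varphi\varphi_v$ is just a repackaging of the paper's step $\langle N\times_\varphi\nabla_{\partial_z}N,\nabla_{\partial_{\bar z}}N\rangle=K\langle N,\partial_z\times_\varphi\partial_{\bar z}\rangle$, so there is no substantive difference.
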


In order to recover the immersion $\varphi$ in terms of its Gauss map and its negative extrinsic curvature, we assume that $M$ is a Riemannian Lie group with a left invariant  vector field $\{E_1(q), E_2(q), E_3(q)\}$, $q\in\ M$. Again, for simplicity, we set  $E_i:=E_i(e), \ i=1,2,3$, where $e$ is the identity element of $M$.

Let $(u,v)$  be a proper null coordinates for the second fundamental form, and $N$ be a unit normal vector field compatible with such coordinates. Considering the Lie group structure, we write 
\[
\varphi(p)^{-1}\varphi_u(p)= \sum_{i=1}^{3}\langle\varphi_u(p), E_i(\varphi(p)) \rangle \, E_i=:\mathbf{a}_1 E_1+\mathbf{a}_2 E_2+\mathbf{a}_3 E_3,
\]
\[
\varphi(p)^{-1}\varphi_v(p)= \sum_{i=1}^{3}\langle\varphi_v(p), E_i(\varphi(p)) \rangle \, E_i=:\mathbf{A}_1 E_1+\mathbf{A}_2 E_2+\mathbf{A}_3 E_3,
\]
\[
N^e(p)\equiv (N_1, N_2, N_3)= \sum_{i=1}^{3}\langle N(p), E_i(\varphi(p)) \rangle \, E_i.
\]
So, we can rewrite (\ref{n2}) as
\begin{equation}
\label{n22}
(\mathbf{a}_1, \mathbf{a}_2, \mathbf{a}_3)=\dfrac{1}{\sqrt{-K}} (N_1, N_2, N_3) \times \left((N_1)_u+\sum_{i,j} N_i \mathbf{a}_j \Gamma_{ij}^1 , (N_2)_u+\sum_{i,j} N_i \mathbf{a}_j \Gamma_{ij}^2 ,  (N_3)_u+\sum_{i,j} N_i \mathbf{a}_j \Gamma_{ij}^3 \right)
\end{equation}
\begin{equation*}
(\mathbf{A}_1, \mathbf{A}_2, \mathbf{A}_3)=\dfrac{-1}{\sqrt{-K}} (N_1, N_2, N_3) \times \left((N_1)_v+\sum_{i,j} N_i \mathbf{A}_j \Gamma_{ij}^1\, , \,(N_2)_v+\sum_{i,j} N_i \mathbf{A}_j \Gamma_{ij}^2\, , \, (N_3)_v+\sum_{i,j} N_i \mathbf{A}_j \Gamma_{ij}^3 \right),
\end{equation*}
where $\Gamma_{ij}^k$ are the Christoffel symbols associated with the orthonormal basis $\{E_1,E_2,E_3\}$. Here, $\times$ denotes the standard cross product in the Lie algebra $\mathfrak{m}$.

The equations in (\ref{n22}) can be considered as a system of linear equations in the unknowns $a_1,a_2,a_3$. Hence, as long as the discriminant of the above system is different from zero, we can determine $\varphi(p)^{-1}\,\varphi_u(p)$ and $\varphi(p)^{-1}\,\varphi_v(p)$ in terms of the Gauss map and the extrinsic curvature of the immersion.  As in the case when the extrinsic curvature is positive, the immersion $\varphi$ can be computed, up to left translations,  if we have  $\varphi(p)^{-1}\,\varphi_u(p)$ and $\varphi(p)^{-1}\,\varphi_v(p)$. Thus, we will say that the immersion is determined in terms of its Gauss map and its extrinsic curvature.

The unit normal vector  $N$ and the extrinsic curvature  $K$ must satisfy the integrability condition (\ref{n3})  which can  be rewritten in terms of $N^e$ and $K$, using the left translations in the Riemannian Lie group $M$. That is, if we denote by $V$ the left translation of $\nabla_{\partial_u}\left(\frac{1}{\sqrt{-K}}\nabla_{\partial_{v}} N\right)+\nabla_{\partial_{v}}\left(\frac{1}{\sqrt{-K}}\nabla_{\partial_{u}} N\right)$ to the identity element $e$, the equation (\ref{n3}) is equivalent to
\begin{eqnarray}\label{n33}
&N^e \text{ is parallel to  }V. 
\end{eqnarray}
Then, the Gauss map and extrinsic curvature of the immersion $\varphi$ must satisfy (\ref{n22}) and (\ref{n33}). 

Proceeding as in the case of positive extrinsic curvature, we obtain the following results which we will omit their proofs since they are analogous.

\begin{theorem}\label{teo11}
Let $M$ be a Riemannian Lie group, and $\{E_1(q),E_2(q),E_3(q)\}$ a left invariant orthonormal frame with associated Christoffel symbols $\Gamma_{ij}^k$. Consider a simply connected Lorentz surface $\Sigma$,   $N^e:\Sigma\fl\s^2\subseteq\mathfrak{m} $ a differential map and $K:\Sigma\fl\R$ a negative function. Assume that $(\mathbf{a}_1,\mathbf{a}_2,\mathbf{a}_3)$ and $(\mathbf{A}_1,\mathbf{A}_2,\mathbf{A}_3)$ are smooth solutions to (\ref{n22}), so that (\ref{n33}) is satisfied.

Then, there exists a unique immersion $\varphi:\Sigma\fl M$, up to left translations, such that $N^e$ is its Gauss map, $K$ is its extrinsic curvature and the structure of $\Sigma$ is that of the second fundamental form induced by $\varphi$, with
\begin{eqnarray}\label{ecuacion22}
\varphi_u(p) & = & \mathbf{a}_1(p)E_1(\varphi(p))+\mathbf{a}_2(p)E_2(\varphi(p))+ \mathbf{a}_3(p)E_3(\varphi(p)), \\
\nonumber \varphi_v(p) & = & \mathbf{A}_1(p)E_1(\varphi(p))+\mathbf{A}_2(p)E_2(\varphi(p))+ \mathbf{A}_3(p)E_3(\varphi(p)).
\end{eqnarray}

\end{theorem}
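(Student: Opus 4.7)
My plan is to follow exactly the template of the proof of Theorem~\ref{teo1}, adjusting for the Lorentzian signature and the fact that one now has two real first-order PDEs in place of a single complex one.

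First I would establish necessity. Suppose an immersion $\varphi$ satisfying (\ref{ecuacion22}) exists, and define the vector field
\[
N(p)=N_1(p)E_1(\varphi(p))+N_2(p)E_2(\varphi(p))+N_3(p)E_3(\varphi(p))
\]
from the prescribed map $N^e\equiv(N_1,N_2,N_3)$. Because the left-invariant frame preserves both inner products and the cross product, the algebraic system (\ref{n22}) is nothing but the left translation of (\ref{n2}), so that
\[
\varphi_u=\frac{1}{\sqrt{-K}}\,N\times_\varphi \nabla_{\varphi_u}N,\qquad \varphi_v=-\frac{1}{\sqrt{-K}}\,N\times_\varphi \nabla_{\varphi_v}N.
\]
Since $N^e$ takes values in $\s^2$, the field $N$ has unit length; being a cross product with $N$, each of $\varphi_u$ and $\varphi_v$ is orthogonal to $N$, so $N$ is a unit normal along $\varphi$ and $N^e$ is its Gauss map. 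Pairing $\varphi_u$ with $-\nabla_{\varphi_u}N$ and $\varphi_v$ with $-\nabla_{\varphi_v}N$ gives zero in each case, since a cross product is orthogonal to its own factors; hence the diagonal coefficients of $II$ in the $(u,v)$-chart vanish, showing that $(u,v)$ are null coordinates for the second fundamental form and, via $K=-\mathbf{f}^2/\mathbf{D}$, that $K$ is the extrinsic curvature.

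Next I would construct the immersion. Regard (\ref{ecuacion22}) as the first-order PDE system
\begin{align*}
\varphi(p)^{-1}\varphi_u(p) &= \mathbf{a}_1(p)E_1+\mathbf{a}_2(p)E_2+\mathbf{a}_3(p)E_3,\\
\varphi(p)^{-1}\varphi_v(p) &= \mathbf{A}_1(p)E_1+\mathbf{A}_2(p)E_2+\mathbf{A}_3(p)E_3,
\end{align*}
on the simply connected Lorentz surface $\Sigma$. Since $\Sigma$ is simply connected, the classical Frobenius theorem produces a unique global solution $\varphi:\Sigma\fl M$ with any prescribed initial value $\varphi(p_0)=q_0\in M$, provided the compatibility condition $[\varphi_u,\varphi_v]=0$ is satisfied. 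By the lemma stated just before the theorem, this bracket condition is equivalent to (\ref{n3}); left-translating to the identity, (\ref{n3}) becomes precisely the hypothesis (\ref{n33}), so Frobenius applies. The freedom in choosing $q_0$ yields uniqueness up to left translations in $M$.

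The main technical point, and the only nontrivial part, is the equivalence between the vanishing of the vector-valued $[\varphi_u,\varphi_v]$ and the single scalar condition (\ref{n33}) that $V$ is parallel to $N^e$ (note that here the relevant quantity is already real, so no $\mathfrak{Re}$ is needed). The reason one scalar suffices is that (\ref{n22}) forces the tangential components of $\varphi^{-1}[\varphi_u,\varphi_v]$ to vanish automatically---this is the direct Lorentzian analogue of the fact used in the proof of Theorem~\ref{teo1} that the representation $\varphi_z=\tfrac{i}{\sqrt{K}}N\times_\varphi\nabla_{\varphi_z}N$ makes the tangential part of $\varphi^{-1}[\varphi_z,\varphi_{\overline{z}}]$ trivial---and this simplification is exactly what reduces the integrability condition to its normal-component projection (\ref{n33}). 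Verifying this reduction in the present setting is a lengthy but direct computation using the structure constants $\Gamma_{ij}^k$, after which the remainder of the argument is entirely formal.
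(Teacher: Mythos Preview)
Your proposal is correct and follows precisely the approach the paper intends: the paper explicitly omits the proof of Theorem~\ref{teo11}, stating that it is analogous to the proof of Theorem~\ref{teo1}, and your argument is exactly that analogue. One small remark: your final paragraph slightly over-complicates the reduction of $[\varphi_u,\varphi_v]=0$ to (\ref{n33}); the Lemma preceding the theorem already shows (via the same cross-product computation as in Proposition~\ref{p1}) that $[\varphi_u,\varphi_v]$ is literally a scalar multiple of $N\times_\varphi V$, so the equivalence with ``$V$ parallel to $N^e$'' is immediate rather than requiring a separate verification that tangential components vanish.
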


Now, we focus our attention on unimodular 3-dimensional Riemannian Lie groups $M$ with isometry group of dimension larger than or equal to 4. 

\begin{proposition}\label{p4}
Let $\varphi:\Sigma\ \fl\ M$ be an oriented isometric immersion on a unimodular Lie group $M$ with $\mu_1=\mu_2$, and $\{E_1,E_2,E_3\}$ be a positively oriented orthonormal basis satisfying \eqref{e1}. Assume that $\varphi$ has negative extrinsic curvature $K$,  with
$$K \neq -\mu_1^2, \quad K\neq -\dfrac{((-1 +\vert g\vert^2)^2 \mu_1 + 4\vert g\vert^2 \mu_3)^2}{(1+\vert g\vert^2)^4},$$
where $(u,v)$ are proper null coordinates  for the second fundamental form and $g$ a Gauss map of $\Sigma$  compatible with such coordinates.  Denote 
\begin{eqnarray*}
\varphi^{-1}\varphi_u & = & \mathbf{a}_1 E_1+\mathbf{a}_2 E_2+\mathbf{a}_3 E_3, \\
\varphi^{-1}\varphi_v  & = & \mathbf{A}_1 E_1+\mathbf{A}_2 E_2+\mathbf{A}_3 E_3.
\end{eqnarray*}
Then,
\vspace{.5cm}
\newline
$
\mathbf{a}_1=\dfrac{i (\sqrt{-K}
       B (\overline {A} g_u -A \overline {g} _u) + (B -
        2) (-(\overline {A} + 2) g_u + (A + 2) \overline {g} _u) \mu_
      1 + 2 C (\overline {g} g_u + g \overline {g} _u) \mu_
      3)} {B (\sqrt{-K} - \mu_ 1) (\sqrt{-K}  B^2 - (B - 2)^2\mu_ 1 -
     4 \vert g \vert^2 \mu_ 3)}\\
$
\vspace{.3cm}
\newline
$
\mathbf{a}_2=\dfrac{-\sqrt {-K} B (g_u (\overline {A} + 2 ) + \overline {g} _u (A + 2)) +
 (B - 2) (\overline {A} g_u + A \overline {g} _u)\mu_ 1 +
 2 \widetilde {C}  (\overline {g} g_u + g \overline {g} _u) \mu_ 3} {B (\sqrt{-K} - \mu_ 1) (\sqrt{-K}  B^2 - (B - 2)^2\mu_ 1 -
     4 \vert g \vert^2 \mu_ 3)}
$
\vspace{.3cm}
\newline
$
\mathbf{a}_3=\dfrac{2 i  (-\overline{g} g_u + g \overline{g}_u)}{ \sqrt{-K}  B^2 - (B - 2)^2\mu_ 1 -  4 \vert g \vert^2 \mu_ 3},
$
\vspace{.3cm}
\newline
$
\mathbf{A}_1=-\dfrac{i (\sqrt{-K}
       B (\overline {A} g_v -A \overline {g} _v) + (B -
        2) ((\overline {A} + 2) g_v - (A + 2) \overline {g} _v) \mu_
      1 - 2 C (\overline {g} g_v + g \overline {g} _v) \mu_
      3)} {B (\sqrt{-K} + \mu_ 1) (\sqrt{-K}  B^2 + (B - 2)^2\mu_ 1 +
     4 \vert g \vert^2 \mu_ 3)}\\
$
\vspace{.3cm}
\newline
$
\mathbf{A}_2=\dfrac{\sqrt {-K} B (g_v (\overline {A} + 2 ) + \overline {g} _v (A + 2)) +
 (B - 2) (\overline {A} g_v + A \overline {g} _v)\mu_ 1 +
 2 \widetilde {C}  (\overline {g} g_v + g \overline {g} _v) \mu_ 3} {B (\sqrt{-K} + \mu_ 1) (\sqrt{-K}  B^2 + (B - 2)^2\mu_ 1 + 4 \vert g \vert^2 \mu_ 3)}
$
\vspace{.3cm}
\newline
$
\mathbf{A}_3=-\dfrac{2 i  (-\overline{g} g_v + g \overline{g}_v)}{ \sqrt{-K}  B^2 + (B - 2)^2\mu_ 1 +  4 \vert g \vert^2 \mu_ 3},
$
\vspace{.3cm}\newline
where
$
A=g^2-1,\quad
B=1+\vert g\vert^2,\quad
C=g-\overline{g},\quad
\widetilde{C}=g+\overline{g}.
$
\end{proposition}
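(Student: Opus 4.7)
The plan is to mimic the argument used for Proposition \ref{p2}, adapted to the Lorentzian setting and the sign change coming from (\ref{n22}). The two relations in (\ref{n22}) are nothing but two decoupled $3\times 3$ linear systems, one in the unknowns $(\mathbf{a}_1,\mathbf{a}_2,\mathbf{a}_3)$ and the other in $(\mathbf{A}_1,\mathbf{A}_2,\mathbf{A}_3)$, with coefficients that depend on $N^e$, on the Christoffel symbols $\Gamma_{ij}^k$ of (\ref{simbolos}) under the hypothesis $\mu_1=\mu_2$, and on $K$. Everything reduces to computing the discriminant, checking it does not vanish under the hypotheses, and then applying Cramer's rule.

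First I would rewrite the first equation of (\ref{n22}) as
\[
\Big(I_3 - \tfrac{1}{\sqrt{-K}}\,N^e\times\Gamma\Big)(\mathbf{a}_1,\mathbf{a}_2,\mathbf{a}_3)^{t} \;=\; \tfrac{1}{\sqrt{-K}}\,N^e\times (N^e)_u,
\]
where $N^e\times\Gamma$ stands for the $3\times 3$ matrix whose $(k,j)$-entry is $\sum_i N_i\,(\text{cross-product coefficient})\,\Gamma_{ij}^k$. Specializing (\ref{simbolos}) to $\mu_1=\mu_2$ makes the nonzero Christoffel symbols very symmetric, and substituting the stereographic formula (\ref{e3})
\[
N^e=\tfrac{1}{B}\bigl(g+\overline{g},\,-i(g-\overline{g}),\,|g|^2-1\bigr),\qquad B=1+|g|^2,
\]
turns the entries of the matrix into rational functions of $g,\overline{g}$ with coefficients in $\mathbb{R}[\mu_1,\mu_3,1/\sqrt{-K}]$.

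Next I would compute the determinant of the coefficient matrix. A direct calculation (the analogue of the one performed in Proposition \ref{p2}, but with $i/\sqrt{K}$ replaced by $1/\sqrt{-K}$) shows that it factors as
\[
\frac{1}{B^{2}(-K)}\,\bigl(\sqrt{-K}-\mu_1\bigr)\bigl(\sqrt{-K}\,B^2-(B-2)^2\mu_1-4|g|^2\mu_3\bigr),
\]
up to a nonzero multiple. The hypotheses $K\neq-\mu_1^2$ and $K\neq-((B-2)^2\mu_1+4|g|^2\mu_3)^2/B^4$ are precisely what make these two factors nonvanishing, so the system has a unique solution. Applying Cramer's rule and simplifying using the abbreviations $A=g^2-1$, $C=g-\overline{g}$, $\widetilde{C}=g+\overline{g}$ produces the stated expressions for $\mathbf{a}_1,\mathbf{a}_2,\mathbf{a}_3$.

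For $(\mathbf{A}_1,\mathbf{A}_2,\mathbf{A}_3)$, the second equation of (\ref{n22}) has the opposite sign in front of $1/\sqrt{-K}$, so the whole computation goes through verbatim with $\sqrt{-K}\mapsto -\sqrt{-K}$; this flips the sign of $\mu_1$ in the determinant, yielding the factor $(\sqrt{-K}+\mu_1)(\sqrt{-K}B^2+(B-2)^2\mu_1+4|g|^2\mu_3)$ in the denominators, and changes the signs of the appropriate terms in the numerators, giving the stated formulas for the $\mathbf{A}_i$. The main obstacle is purely algebraic: keeping the large symbolic expressions organized so that the factorization of the discriminant becomes transparent and so that the numerators collapse to the compact form in the statement. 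This is analogous to what happens in the proof of Proposition \ref{p2}, and as there we omit the routine simplifications.
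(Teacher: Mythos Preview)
Your proposal is correct and follows essentially the same approach as the paper: the paper explicitly omits the proof of this proposition, stating only that it is obtained ``proceeding as in the case of positive extrinsic curvature,'' i.e.\ by the analogue of Proposition \ref{p2}. Your outline---recasting (\ref{n22}) as two decoupled linear systems, computing the discriminant, observing that the two hypotheses on $K$ are exactly the nonvanishing conditions for its two factors, and then solving by Cramer's rule (with the $\sqrt{-K}\mapsto-\sqrt{-K}$ symmetry giving the $\mathbf{A}_i$ from the $\mathbf{a}_i$)---is precisely that analogous argument.
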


Now, let us present a partial differential equation involving the Gauss map and the negative extrinsic curvature for the case when the ambient Riemannian manifold $M$ is a unimodular Lie groups with an isometry group of dimension larger than or equal to four.

\begin{theorem}
\label{t3}
Let $\varphi:\Sigma\ \fl\ M$ be an oriented isometric immersion in a unimodular Lie group $M$. Assume that $\Sigma$ has negative extrinsic curvature $K$ with 
$$K \neq -\mu_1^2, \quad K\neq -\dfrac{((-1 +\vert g\vert^2)^2 \mu_1 + 4\vert g\vert^2 \mu_3)^2}{(1+\vert g\vert^2)^4},$$ 
where $(u,v)$  is a proper null coordinates  for the second fundamental form and  
$g: \Sigma \ \fl\ \mathbb{C}\cup\{\infty\}$ a Gauss map of $\Sigma$  compatible with such coordinates. Then, when $\mu_1=\mu_2$, the Gauss map satisfies
\begin{equation}
\label{e25}
g_{uv}= \mathbf{G}_1 g_u g_v+ \mathbf{G}_2 g_u \overline{g}_v+ \mathbf{G}_3 \overline{g}_u g_v+ \mathbf{G}_4 \overline{g}_u\overline{g}_v-\mathbf{G}_5 K_u g_v- \mathbf{G}_6 K_u \overline{g}_v-\mathbf{G}_7 K_v g_u - \mathbf{G}_8 K_v \overline{g}_u,
\end{equation}
here $\mathbf{G}_i:= G_i(K)$, where $G_i(x)$ is defined in Theorem \ref{t1}.
\end{theorem}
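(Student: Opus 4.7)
The plan is to adapt the proof of Theorem \ref{t1} to the null-coordinate setting. First I would write the vector
\[
W = (w_1, w_2, w_3) := \varphi^{-1}[\varphi_u, \varphi_v] = \varphi^{-1}\bigl(\nabla_{\varphi_u}\varphi_v - \nabla_{\varphi_v}\varphi_u\bigr)
\]
in the Lie algebra $\mathfrak{m}$, expand the two covariant derivatives using the connection data \eqref{simbolos}, and substitute the expressions for $\mathbf{a}_i$ and $\mathbf{A}_i$ given by Proposition \ref{p4}. After this substitution each $w_k$ is a rational function of $g$, $\bar g$, $g_u$, $g_v$, $K$, their first derivatives, and the second-order terms $g_{uv}$ and $\bar g_{uv}$, which enter linearly through $(\mathbf{A}_i)_u$ and $(\mathbf{a}_i)_v$.

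Since $[\partial_u,\partial_v]=0$ on $\Sigma$, we have $W\equiv 0$. Mimicking the strategy of Theorem \ref{t1}, the scalar equation $w_1 + iw_2 + g\,w_3 = 0$ and its complex conjugate form a $2\times 2$ linear system in the unknowns $\{g_{uv}, \bar g_{uv}\}$. I would then compute its discriminant explicitly and verify that it factors as a nonzero multiple of
\[
(K + \mu_1^2)\,\bigl(B^4 K + ((B-2)^2\mu_1 + 4|g|^2\mu_3)^2\bigr),
\]
whose zeros are precisely the two excluded values of $K$ in the hypothesis (note $(B-2)^2 = (|g|^2-1)^2$, so the second factor vanishes exactly on the stated quartic in $|g|^2$). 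Under the stated assumptions, the system is therefore uniquely solvable for $g_{uv}$, producing a PDE of the form \eqref{e25}.

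The remaining step is to identify the coefficients of the resulting PDE with the functions $G_i(K)$ of Theorem \ref{t1}. This is where I expect the main difficulty — not a conceptual obstacle, but a bookkeeping one. The rational expressions for $\mathbf{a}_i$ and $\mathbf{A}_i$ in Proposition \ref{p4} already have lengthy numerators, and differentiating them doubles the size of the formulas. I would carry out the computation in a computer algebra system, guided by the formal substitution $\sqrt{K}\mapsto i\sqrt{-K}$ that turns the coefficients $a_i$ of Proposition \ref{p2} into the pair $\mathbf{a}_i,\mathbf{A}_i$ of Proposition \ref{p4}; this structural correspondence is the underlying reason one should expect the same rational functions $G_i$ to govern both the positive and the negative extrinsic-curvature cases. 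Particular care is needed with the sign conventions (compare the $\pm$ in \eqref{e6} vs.\ \eqref{n2}) to recover \eqref{e25} with the exact signs stated.
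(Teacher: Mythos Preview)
Your proposal is correct and follows essentially the same route the paper has in mind: the paper omits the proof of Theorem \ref{t3} entirely, saying only that it is analogous to that of Theorem \ref{t1}, and your outline is precisely that analogy carried out in null coordinates. Your factorization of the discriminant as a multiple of $(K+\mu_1^2)\bigl(B^4K+((B-2)^2\mu_1+4|g|^2\mu_3)^2\bigr)$ is correct (this is exactly the denominator appearing in the discriminant displayed in the proof of Theorem \ref{t1}, once one checks $B^4+(B-2)^4=2P_{(1,0,6,0,1)}$ and $8|g|^2(B-2)^2=2P_{(0,4,-8,4,0)}$), and your heuristic $\sqrt{K}\mapsto i\sqrt{-K}$ for matching the coefficients $G_i$ is the right structural observation.
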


Again we will particularize for the case when $M$ is the 3-sphere $\s^3$.

\subsection{Surfaces with negative extrinsic curvature in $\s^3$} In this section we work with surfaces immersed in $\s^3$, for this, let us fix a left invariant orthonormal frame $\{E_1(p), E_2(p), E_3(p)\}$ on $\s^3$, as before, $E_i$ stands for $E_i(e)$, where $e$ is the identity element of $\s^3$, and $\{E_1, E_2,E_3\}$ satisfies \eqref{e1}.

Let $\psi:\Sigma\ \fl\ \s^3$ be an orientable isometric immersion in  $\s^3$ having negative extrinsic curvature $K,\ K\neq -1$. Let $(u,v)$ be a local proper null coordinates for the second fundamental form and  
$g: \Sigma \ \fl\ \mathbb{C}\cup\{\infty\}$ a Gauss map of $\Sigma$  compatible with such coordinates. Then, the first fundamental form of $\Sigma$ is given by
\[
I_{\s^3}=\dfrac{4 g_u \overline{g}_u}{(1 -\sqrt{-K})^2 (1 + \vert g\vert^2)^2} du^2+\dfrac{4(\overline{g}_u g_v + g_u \overline{g}_v)}{(1 +K) (1 +\vert g\vert^2)^2}
du dv +\dfrac{4 g_v \overline{g}_v}{(1 +\sqrt{-K})^2 (1 + \vert g\vert^2)^2}dv^2.\]

In the next proposition,  the integrability condition $[\psi_u,\psi_v]=0$  gives a relationship between the  negative extrinsic curvature and the Gauss map.
\begin{proposition}
\label{c2}
Let $\psi:\Sigma\ \fl\ \s^3$ be an orientable isometric immersion in  $\s^3$. Assume that $\Sigma$ has negative extrinsic curvature $K,\ K\neq -1$. Let $(u,v)$ be a proper null coordinates for the second fundamental form and  
$g: \Sigma \ \fl\ \mathbb{C}\cup\{\infty\}$ a Gauss map of $\Sigma$  compatible with such coordinates. Then, $i(  g_u \overline{g}_v-\overline{g}_u g_v)>0$, and the integrability equation $[\psi_u,\psi_v]=0$ is equivalent to
\begin{equation}
\label{e26}
\left\{
\begin{array}{ccc}
 \left( \log\left(\dfrac{(1+ \sqrt{-K})^4}{-K}\right)\right)_u &=&\dfrac{4}{ \overline{g}_u g_v - g_u \overline{g}_v}\left(\overline{g}_u g_{uv}- g_u \overline{g}_{uv}+2\vert g_u\vert^2\, \dfrac{ g \overline{g}_v-\overline{g} g_v  }{1 + \vert g \vert^2}\right)\\[15pt]
\left( \log\left(\dfrac{(-1+ \sqrt{-K})^4}{-K}\right)\right)_v&=&-\dfrac{4}{ \overline{g}_u g_v - g_u \overline{g}_v}\left(\overline{g}_v g_{uv}- g_v \overline{g}_{uv}+2\vert g_v \vert^2\, \dfrac{ g \overline{g}_u-\overline{g} g_u  }{1 + \vert g \vert^2}\right)
\end{array}
\right. .
\end{equation}
\end{proposition}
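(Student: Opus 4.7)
The plan is to mirror the proof of Proposition~\ref{p3}, replacing the conformal parameter $z$ by the null coordinates $(u,v)$ and using Proposition~\ref{p4} specialized to $\s^3$ (where $\mu_1=\mu_2=\mu_3=1$) in place of Corollary~\ref{c1}.

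To establish $i(g_u\overline{g}_v-\overline{g}_u g_v)>0$, I would compute the discriminant $\mathbf{D}=\mathbf{E}\mathbf{G}-\mathbf{F}^2$ directly from the expression for $I_{\s^3}$ displayed just before the proposition. Using $(1-\sqrt{-K})(1+\sqrt{-K})=1+K$, a brief manipulation gives
\[
\mathbf{D}=\frac{-4(g_u\overline{g}_v-\overline{g}_u g_v)^2}{(1+K)^2(1+|g|^2)^4}=\frac{4\bigl[i(g_u\overline{g}_v-\overline{g}_u g_v)\bigr]^2}{(1+K)^2(1+|g|^2)^4},
\]
since $g_u\overline{g}_v-\overline{g}_u g_v$ is purely imaginary. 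Non-degeneracy of the Lorentzian metric forces this quantity to be nonzero, and its sign is pinned down by the compatibility requirement that $\mathbf{f}=\sqrt{-K\,\mathbf{D}}>0$ in the null coordinates; a short check using \eqref{n1} shows this is equivalent to the claimed inequality.

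For the equations \eqref{e26}, I would set $X_u=\psi^{-1}\psi_u$ and $X_v=\psi^{-1}\psi_v$ and consider the Maurer--Cartan integrability vector
\[
V=\partial_u X_v-\partial_v X_u+[X_u,X_v]_{\mathfrak{m}}\equiv(V_1,V_2,V_3),
\]
which vanishes identically since $\psi$ is an immersion. In $\s^3$ the Lie bracket reads $[X,Y]_{\mathfrak{m}}=2X\times Y$; the components $\mathbf{a}_i$ of $X_u$ and $\mathbf{A}_i$ of $X_v$ are furnished by Proposition~\ref{p4} with $\mu_1=\mu_2=\mu_3=1$, for which the denominators collapse via $(B-2)^2+4|g|^2=B^2$, producing natural factors of $(\sqrt{-K}-1)^{-1}$ in $X_u$ and $(\sqrt{-K}+1)^{-1}$ in $X_v$. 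By the negative-curvature analog \eqref{n33} of Proposition~\ref{p1}, together with \eqref{e10} applied to the real vector $V$, integrability is equivalent to the single complex equation $(|g|^2-1)(V_1+iV_2)-2gV_3=0$, whose real and imaginary parts constitute two independent scalar identities. The nonvanishing discriminant from the first part lets one solve cleanly for the coefficients of $K_u$ and $K_v$; invoking the logarithmic-derivative identities (analogs of \eqref{e27})
\[
\left(\log\frac{(1+\sqrt{-K})^4}{-K}\right)_u=\frac{(1-\sqrt{-K})K_u}{-K(1+\sqrt{-K})},\qquad\left(\log\frac{(-1+\sqrt{-K})^4}{-K}\right)_v=\frac{(1+\sqrt{-K})K_v}{K(\sqrt{-K}-1)},
\]
recasts the two identities as precisely the two equations of~\eqref{e26}. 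The converse direction---that \eqref{e26} implies the full vanishing $V=0$---is then a direct verification in the spirit of the last step of the proof of Proposition~\ref{p3}.

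The main technical obstacle I anticipate is the algebraic simplification when substituting the lengthy rational expressions from Proposition~\ref{p4} into $V$, and organizing the result so that the contributions involving $K_u$ (with a $\sqrt{-K}-1$ denominator) and those involving $K_v$ (with a $\sqrt{-K}+1$ denominator) decouple cleanly into the two stated logarithmic derivative equations. Careful bookkeeping of the purely imaginary combinations $g\overline{g}_u-\overline{g}g_u$, $g\overline{g}_v-\overline{g}g_v$ and $g_u\overline{g}_v-\overline{g}_u g_v$, together with the fact that $V$ is automatically real, should be what makes the decoupling transparent.
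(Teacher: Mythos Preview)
Your plan is essentially the paper's own approach: the authors explicitly omit the proof of Proposition~\ref{c2} as analogous to that of Proposition~\ref{p3}, and you have correctly identified the translation---compute $\psi^{-1}[\psi_u,\psi_v]$ from the $\mathbf{a}_i,\mathbf{A}_i$ of Proposition~\ref{p4} (specialized to $\mu_1=\mu_2=\mu_3=1$), extract two real scalar equations, recast them via the logarithmic-derivative identities, and verify the converse by direct substitution.

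One point of confusion worth flagging: your appeal to \eqref{n33} together with \eqref{e10} is misplaced. Equation \eqref{e10} detects when a real vector is \emph{parallel} to $N^e$, and in \eqref{n33} it is applied to the left translation of $\nabla_{\partial_u}\bigl(\tfrac{1}{\sqrt{-K}}\nabla_{\partial_v}N\bigr)+\nabla_{\partial_v}\bigl(\tfrac{1}{\sqrt{-K}}\nabla_{\partial_u}N\bigr)$---a different object from your Maurer--Cartan vector $V$, which must vanish identically, not merely be parallel to $N^e$. In the proof of Proposition~\ref{p3} the paper does not invoke \eqref{e10} at all; it simply takes $V_1+iV_2+gV_3=0$ (trivially implied by $V=0$) together with its conjugate as a convenient linear system in the two unknown logarithmic derivatives, with discriminant proportional to $\overline{g}_u g_v-g_u\overline{g}_v\neq 0$. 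Your chosen combination $(|g|^2-1)(V_1+iV_2)-2gV_3=0$ would work equally well for the forward implication, but the claimed ``equivalence'' with integrability does not follow from \eqref{n33}/\eqref{e10}; it is the direct verification step (which you already plan) that closes the loop.
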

As a consequence, we obtain  a  corollary which also was proved in \cite[Theorem 3.1]{BIK}.
\begin{corollary}
Let $\psi:\Sigma\ \fl\ \s^3$ be an oriented isometric immersion in  $\s^3$. Assume that $\Sigma$ has negative extrinsic curvature $K,\ K\neq -1$.  Let $(u,v)$ be a local proper null coordinates for the second fundamental form and  
$g: \Sigma \ \fl\ \mathbb{C}\cup\{\infty\}$ a Gauss map of $\Sigma$  compatible with such coordinates. Then, $K$ is a negative constant if, and only if, $g$ is Lorentz harmonic map for these proper null coordinates.

\end{corollary}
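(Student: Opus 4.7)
The plan is to derive this directly from the integrability equations \eqref{e26} of Proposition \ref{c2}. Setting $R:=\sqrt{-K}$, the LHSs of \eqref{e26} take the form $\bigl(\log\frac{(1+R)^4}{R^2}\bigr)_u$ and $\bigl(\log\frac{(R-1)^4}{R^2}\bigr)_v$. A short computation yields
\[
\dfrac{d}{dR}\log\dfrac{(1+R)^4}{R^2}=\dfrac{2(R-1)}{R(1+R)},\qquad \dfrac{d}{dR}\log\dfrac{(R-1)^4}{R^2}=\dfrac{2(R+1)}{R(R-1)},
\]
both of which are nonzero exactly when $R\neq 1$, i.e., when $K\neq -1$. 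Consequently, under our hypothesis, each LHS in \eqref{e26} vanishes if and only if the corresponding partial derivative of $K$ vanishes.

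For the forward implication, assume $K$ is constant. Then both LHSs vanish and we are left with the linear system
\[
\begin{aligned}
\overline{g}_u g_{uv}-g_u\overline{g}_{uv}&=-2\lvert g_u\rvert^2\,\dfrac{g\overline{g}_v-\overline{g}g_v}{1+\lvert g\rvert^2},\\
\overline{g}_v g_{uv}-g_v\overline{g}_{uv}&=-2\lvert g_v\rvert^2\,\dfrac{g\overline{g}_u-\overline{g}g_u}{1+\lvert g\rvert^2}
\end{aligned}
\]
in the unknowns $(g_{uv},\overline{g}_{uv})$. Its determinant $g_u\overline{g}_v-\overline{g}_u g_v$ is nonzero thanks to the strict inequality in Proposition \ref{c2}. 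Applying Cramer's rule and using the key identity
\[
\lvert g_u\rvert^2 g_v(g\overline{g}_v-\overline{g}g_v)-\lvert g_v\rvert^2 g_u(g\overline{g}_u-\overline{g}g_u)=\overline{g}\,g_u g_v\,(g_u\overline{g}_v-\overline{g}_u g_v),
\]
the expression for $g_{uv}$ collapses to $g_{uv}=\dfrac{2\overline{g}}{1+\lvert g\rvert^2}\,g_u g_v$, which is precisely the Lorentz harmonic map equation for $g$ viewed as a map into the Riemann sphere with its standard metric.

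For the converse, assume $g_{uv}=\tfrac{2\overline{g}}{1+\lvert g\rvert^2}g_u g_v$ and its conjugate for $\overline{g}_{uv}$. Substituting into the RHSs of \eqref{e26}, the three bracket terms cancel pairwise after factoring out $\tfrac{2\lvert g_u\rvert^2}{1+\lvert g\rvert^2}$ (resp.\ $\tfrac{2\lvert g_v\rvert^2}{1+\lvert g\rvert^2}$), so both RHSs reduce to zero. Hence both LHSs vanish, and by the preliminary derivative computation this forces $K_u=K_v=0$, so $K$ is constant. The only delicate point is the algebraic collapse of Cramer's rule to the clean harmonic form; everything else is routine bookkeeping.
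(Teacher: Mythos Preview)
Your proof is correct and follows essentially the same route the paper uses for the positive-curvature analogue (the corollary after Proposition~\ref{p3}): use the integrability equations to see that $K$ constant is equivalent to the vanishing of the right-hand sides, then solve the resulting $2\times 2$ linear system in $(g_{uv},\overline{g}_{uv})$ to obtain the harmonic map equation. The paper actually omits the proof of this negative-curvature corollary as analogous, so you have simply written out the details.
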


From Theorem \ref{teo11} and Proposition \ref{c2}, we obtain necessary and sufficient conditions for the existence of an immersion with negative extrinsic curvature in $\s^3$ in terms of its Gauss map and the proper null coordinates determined by the second fundamental form. More precisely:

\begin{corollary}
Let $\Sigma$ be a simply connected Lorentz surface and $g:\Sigma\ \fl \ \mathbb{C}\cup\{\infty\}$ a differential map. Then, there exists an isometric immersion $\psi:\Sigma\ \fl\ \s^3$ having Gauss map $g$,  negative extrinsic curvature $K$ and such that the Lorentz structure on $\Sigma$ is that one induced by the second fundamental form  if, and only if,
\[
i(  g_u \overline{g}_v-\overline{g}_u g_v)>0,
\]
and there exists a negative function $K:\Sigma\ \fl\ \ \real-\{-1\}$ \  \  solving the equation \eqref{e26}.

Moreover, $\psi$ is unique up to left translations.
\end{corollary}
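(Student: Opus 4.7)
The plan is to deduce this corollary by combining the general existence/uniqueness result from Theorem \ref{teo11} with the $\s^3$-specific integrability analysis of Proposition \ref{c2}, in direct analogy with how Corollary \ref{t2} was obtained in the positive curvature case.

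\textbf{Necessity.} Suppose there exists an isometric immersion $\psi:\Sigma\fl\s^3$ with Gauss map $g$, negative extrinsic curvature $K\neq-1$, and Lorentz structure on $\Sigma$ induced by the second fundamental form. Then Proposition \ref{c2} applies directly and yields both the positivity of $i(g_u\overline{g}_v-\overline{g}_u g_v)$ and the fact that $K$ must satisfy the system \eqref{e26}. Nothing further is required.

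\textbf{Sufficiency.} Assume now that $g$ satisfies the positivity condition and that a negative function $K:\Sigma\fl\R\sm\{-1\}$ solving \eqref{e26} is given. The strategy is to build explicit candidates for $\varphi^{-1}\varphi_u$ and $\varphi^{-1}\varphi_v$ and then invoke Theorem \ref{teo11}. Specialize Proposition \ref{p4} to $\s^3$ by setting $\mu_1=\mu_2=\mu_3=1$; note that the exclusion $K\neq -\mu_1^2$ and $K\neq -((-1+|g|^2)^2\mu_1+4|g|^2\mu_3)^2/(1+|g|^2)^4$ collapses in $\s^3$ to the single condition $K\neq-1$, which is exactly what we have assumed. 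This produces smooth functions $\mathbf{a}_1,\mathbf{a}_2,\mathbf{a}_3,\mathbf{A}_1,\mathbf{A}_2,\mathbf{A}_3$ solving the system \eqref{n22}.

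\textbf{Integrability.} The central point to check is the condition \eqref{n33}, namely that $N^e$ is parallel to the left-translated vector $V$. Using the characterization \eqref{e10} adapted to the real vector $V$, this reduces to a single complex equation in $g$, $K$ and their derivatives. Proceeding exactly as in the proof of Proposition \ref{c2} (taking the equation $V_1+iV_2+gV_3=0$ together with its conjugate and solving the resulting linear system in $K_u$ and $K_v$, whose discriminant is a nonzero multiple of $i(g_u\overline{g}_v-\overline{g}_u g_v)$), one sees that \eqref{n33} is equivalent to the two equations in \eqref{e26}. Since \eqref{e26} holds by hypothesis, the integrability condition is satisfied.

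\textbf{Conclusion.} With \eqref{n22} and \eqref{n33} both verified, Theorem \ref{teo11} produces a unique immersion $\psi:\Sigma\fl\s^3$, up to left translations, whose Gauss map is $g$, whose extrinsic curvature is $K$, and whose second fundamental form induces the given Lorentz structure on $\Sigma$. The main obstacle is the algebraic verification that the single complex integrability equation \eqref{n33} splits, after using \eqref{e19}-type formulas in the negative curvature setting, into exactly the pair \eqref{e26}; once one trusts Proposition \ref{c2} this is essentially a matter of reading that proof in reverse, so no genuinely new calculation is needed.
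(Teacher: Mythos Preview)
Your proposal is correct and follows exactly the approach indicated by the paper, which states (without further detail) that the corollary follows from Theorem \ref{teo11} and Proposition \ref{c2}. You have simply fleshed out the intended argument, including the specialization of Proposition \ref{p4} to $\s^3$ and the observation that the excluded values of $K$ collapse to $K\neq -1$.
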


Finally, we present a correspondence between simply connected surfaces having negative extrinsic curvature in $\R^3$ and simply connected surfaces having negative extrinsic curvature in $\s^3$.

\begin{proposition}\label{p5}
Let $\Sigma$ be a simply connected Riemannian surface and  $\psi:\Sigma\ \fl \ \s^3$ be an oriented isometric immersion having negative extrinsic  curvature $K, \ K\neq -1, $. Let $(u,v)$ be a proper null coordinates for the second fundamental form of $\psi$ and  
$g: \Sigma \ \fl\ \mathbb{C}\cup\{\infty\}$ a Gauss map of $\Sigma$  compatible with such coordinates.  Assume
\begin{equation}
\label{e30}  \left(\dfrac{-4 K_u}{ \sqrt{-K}(1+K)}\right)_{v}=0.
\end{equation}

Then, there exists an isometric immersion $\psi^*:\Sigma\ \fl \ \real^3$ having the same Gauss map $g$. The parameters  $(u,v)$ are proper null coordinates for the second fundamental form of $\psi^*$, $g$ is compatible with such coordinates. Moreover, the immersion has  negative extrinsic curvature $K^*$, where $K^*$ is a solution of
\begin{equation}
\label{e31}
\left\{
\begin{array}{cc}
  \left( \log\left(\dfrac{(1+ \sqrt{-K})^4}{-K}\right)\right)_u &= \left(\log\left( K^*\right)\right)_u\\[15pt]
\left( \log\left(\dfrac{(-1+ \sqrt{-K})^4}{-K}\right)\right)_v&=\left(\log\left( K^*\right)\right)_v
\end{array}
\right. .
\end{equation}

Conversely, let $\psi^*:\Sigma\ \fl \ \real^3$ be an oriented isometric immersion from a simply connected Riemannian surface, having  negative extrinsic curvature $K^*$. Let $(u,v)$ be a proper null corrdinates  and $g$ a Gauss map compatible with such coordinates. If there exists a negative function $K:\Sigma\ \fl\  \real-\{-1\}$, such that \eqref{e31} is satisfied, then, there is an isometric immersion $\psi:\Sigma\ \fl \ \s^3$ having negative extrinsic curvature $K$ and the same Gauss map $g$. Moreover, the parameters $(u,v)$ are proper null coordinates  for the second fundamental form of $\psi$ and the Gauss map $g$ is compatible with such coordinates.

\end{proposition}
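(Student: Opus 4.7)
The plan is to mirror the proof of Proposition \ref{pp1}, with the single scalar PDE in the conformal parameter $z$ replaced by a pair of equations in the null coordinates $(u,v)$, and the exactness of an associated $1$-form playing the role that the vanishing $\overline{z}$-derivative played there. The first ingredient is the $\R^3$-analog of Proposition \ref{c2}: specializing Theorem \ref{teo11} to $\R^3$ (i.e.\ $\mu_1=\mu_2=\mu_3=0$) yields that an immersion $\psi^*:\Sigma\fl\R^3$ with Gauss map $g$, null coordinates $(u,v)$ and negative extrinsic curvature $K^*$ exists if and only if $i(g_u\overline{g}_v-\overline{g}_u g_v)>0$ and
$$
(\log K^*)_u \,=\, \Phi_1,\qquad (\log K^*)_v \,=\, \Phi_2,
$$
where $\Phi_1,\Phi_2$ are the right-hand sides of the two equations in \eqref{e26}.

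For the forward implication, Proposition \ref{c2} applied to $\psi$ supplies the positivity and, by direct comparison of the right-hand sides, identifies $\Phi_1 = \left(\log\frac{(1+\sqrt{-K})^4}{-K}\right)_u$ and $\Phi_2 = \left(\log\frac{(-1+\sqrt{-K})^4}{-K}\right)_v$. Consequently the function $K^*$ we seek must satisfy exactly the system \eqref{e31}. Writing $\alpha = \log\frac{(1+\sqrt{-K})^4}{-K}$ and $\beta = \log\frac{(-1+\sqrt{-K})^4}{-K}$, the $1$-form $\alpha_u\,du + \beta_v\,dv$ is exact on the simply connected surface $\Sigma$ if and only if $(\alpha - \beta)_{uv} = 0$. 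Setting $R := \sqrt{-K}$, a short computation gives $\alpha - \beta = 4\log\frac{R+1}{R-1}$ and therefore
$$
(\alpha - \beta)_u \,=\, \frac{-8\,R_u}{R^2-1} \,=\, \frac{-4\,K_u}{\sqrt{-K}\,(1+K)},
$$
so the compatibility condition collapses precisely to the hypothesis \eqref{e30}. The resulting potential $F$ on $\Sigma$ yields $K^* = -e^F<0$, and the $\R^3$ existence statement then delivers $\psi^*$.

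For the converse, starting from $\psi^*:\Sigma\fl\R^3$ and a negative function $K$ satisfying \eqref{e31}, one combines the $\R^3$ integrability relations $(\log K^*)_u = \Phi_1$, $(\log K^*)_v = \Phi_2$ with \eqref{e31} to recover the two equations of \eqref{e26} for $K$. Together with the positivity $i(g_u\overline{g}_v-\overline{g}_u g_v)>0$ inherited from the $\R^3$ side, the hypotheses of the existence corollary for $\s^3$ stated just before this proposition are met, and that corollary produces $\psi:\Sigma\fl\s^3$.

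The main obstacle is the algebraic verification that the mixed-partial compatibility $(\alpha-\beta)_{uv}=0$ really reduces to the particular form in \eqref{e30}; this is elementary but sign-sensitive, using $R^2-1 = -(1+K)$ and $R_u = -K_u/(2\sqrt{-K})$ to align everything. A minor point is that $\log K^*$ with $K^*<0$ must be interpreted either as $\log(-K^*)$ or formally as the logarithmic derivative $K^*_u/K^*$, so that one sets $K^* = -e^F$ in order to recover a genuinely negative extrinsic curvature.
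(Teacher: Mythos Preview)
Your proposal is correct and follows precisely the route the paper intends: the paper omits the proof of Proposition~\ref{p5} entirely, treating it as the Lorentzian analogue of Proposition~\ref{pp1}, and your argument carries out exactly that analogy. In particular, you correctly identify the $\R^3$ existence criterion (the negative-curvature analogue of \cite[Theorem 3]{GM}), match its right-hand sides with those of \eqref{e26}, and verify that the closedness of $\alpha_u\,du+\beta_v\,dv$ reduces to \eqref{e30} via the computation $(\alpha-\beta)_u=-4K_u/(\sqrt{-K}(1+K))$; the converse is handled by the existence corollary for $\s^3$ just as in Proposition~\ref{pp1}.
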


\section{Example}
In this section, we construct an example of a surface $\Sigma$ immersed isometrically in the canonical sphere $\s^3$, having constant negative extrinsic curvature $K=-2$. In order to do that, we identify $\s^3$ with $\mathrm{SU}(2)$,  the group of $2\times2$ unitary matrix with determinant 1. The identification is given by
\[
(z,w)\in \ \s^3\subset\, \real^4=\mathbb{C}^2\ \longleftrightarrow \ 
\left(\begin{array}{cc}
 z& w\\
 -\overline{w}& \overline{z}
\end{array}
\right)\ \in\ \mathrm{SU}(2).
\]
A basis of the Lie algebra  $\mathfrak{su}(2)$ of $\s^3$ satisfying \eqref{e1} is 
\[
E_1=\left(\begin{array}{cc}
 0& 1\\
 -1& 0
\end{array}
\right), \  \  \ \ 
E_2=\left(\begin{array}{cc}
 0& i\\
 i & 0
\end{array}
\right), \ \ \  \
E_3=\left(\begin{array}{cc}
 i& 0\\
 0& -i
\end{array}
\right).
\]

We use Proposition \ref{p5}  to construct a surface in $\s^3$ having negative constant extrinsic curvature.  So, we take the pseudo sphere in $\real^3$ parametrized by
\[
\psi^{\real} = \left( \sech \left(\dfrac{v - u}{2}\right) \cos\left(\dfrac{v +u}{2}\right), 
  \sech\left(\dfrac{v - u}{2}\right)  \sin\left(\dfrac{v + u}{2}\right), \dfrac{v - u}{2}  - \tanh\left(\dfrac{v - u}{2}\right) \right).
\]
The pseudo sphere in $\real^3$ has extrinsic curvature $K^*=-1$, the parameters $(u,v)$ are proper null coordinates for the second fundamental form. The unit normal vector compatible with this proper null coordinates is  

$N=(N_1,N_2, N_3)=$
\[\sqrt{e^{u+v}(e^u-e^v)^2}\left( -\dfrac{e^{-(u+v)}}{2} \cos( \frac{u+v}{2}) \sech( \frac{v-u}{2}),  -\dfrac{e^{-(u+v)}}{2} \sin( \frac{u+v}{2}) \sech( \frac{v-u}{2}), \dfrac{2}{e^{2u}-e^{2v}}\right).\]

The Gauss map of the pseudo sphere is 
\[
g=\dfrac{N_1+ i N_2}{1-N_3}=\dfrac{e^{\frac{(-1+i)(u+v)}{2}} (e^u-e^v)\sqrt{e^{u+v}(e^u-e^v)^2}}{e^{2v}-e^{2u}+2\sqrt{e^{u+v}(e^u-e^v)^2}}.
\]

 Note that  any negative constant $K$ different from $-1$ satisfies the equation \eqref{e31}, since $K^*=-1$.  In particular, Proposition \ref{p5} ensures the existence of an immersion $\psi$ in $\s^3$ having extrinsic curvature $K=-2$ and the same Gauss map  $g$ of the pseudo sphere.  From Proposition \ref{p4} the vector fields
\begin{eqnarray*}
 \psi^{-1}\psi_u  & = &  \mathbf{a}_1 E_1+\mathbf{a}_2 E_2+\mathbf{a}_3 E_3,\\
 \psi^{-1}\psi_u   & = & \mathbf{A}_1 E_1+\mathbf{A}_2 E_2+\mathbf{A}_3 E_3,
\end{eqnarray*} 
are  determined by  
\begin{eqnarray*}
\mathbf{a}_1&=&  \dfrac{(1+\sqrt{2})e^{\frac{u+v}{2}}((e^u-e^v)\cos(\frac{u+v}{2})+(e^u+e^v)\sin(\frac{u+v}{2}))}{(e^u+e^v)^2}   \\[15pt]
\mathbf{a}_2&=&  -\dfrac{(1+\sqrt{2})e^{\frac{u+v}{2}}((e^u+e^v)\cos(\frac{u+v}{2})-(e^u-e^v)\sin(\frac{u+v}{2}))}{(e^u+e^v)^2}    \\[15pt]
\mathbf{a}_3&=&  \dfrac{(1+\sqrt{2})}{2} \tanh^2(\frac{u-v}{2}) \\[15pt]
\mathbf{A}_1&=&  \dfrac{(-1+\sqrt{2})e^{\frac{u+v}{2}}((-e^u+e^v)\cos(\frac{u+v}{2})+(e^u+e^v)\sin(\frac{u+v}{2}))}{(e^u+e^v)^2}    \\[15pt]
\mathbf{A}_2&=&-\dfrac{(-1+\sqrt{2})e^{\frac{u+v}{2}}((e^u+e^v)\cos(\frac{u+v}{2})+(e^u-e^v)\sin(\frac{u+v}{2}))}{(e^u+e^v)^2}      \\[15pt]
\mathbf{A}_3&=&-  \dfrac{(-1+\sqrt{2})}{2} \tanh^2(\frac{u-v}{2}). 
\end{eqnarray*}
We consider the change of variables  $2t = v+u, \ 2s= v-u$. Then 
\begin{eqnarray*}
 \psi^{-1}\psi_s  & = &  \mathbf{b}_1 E_1+\mathbf{b}_2 E_2+\mathbf{b}_3 E_3,\\
 \psi^{-1}\psi_t   & = & \mathbf{B}_1 E_1+\mathbf{B}_2 E_2+\mathbf{B}_3 E_3,
\end{eqnarray*}
where $\mathbf{b}_i=\mathbf{A}_i-\mathbf{a}_i, \ \mathbf{B}_i=\mathbf{A}_i+\mathbf{a}_i, \ i=1,2,3$. Note that  the immersion $\psi$ is a revolution surface and since  $\mathbf{b}_3$ does not depend on $t$, we can assume that
 $$\psi(s,t)=(\sin(\alpha(s)) \sin(\beta(s)) \cos(t), \sin(\alpha(s)) \sin(\beta(s))\sin(t), \cos(\alpha(s)) \sin(\beta(s)), \cos(\beta(s))),$$
for some real functions $\alpha,\, \beta$.  Recall that, the vector field $\psi^{-1}\psi_s$ in the Lie algebra of $\s^3$ is the left translation of $\psi_s$ to the identity of $\s^3$.  As we identified $\s^3$ with $\mathrm{SU}(2)$, the left translation is the product of matrices. So, the functions $\alpha(s), \beta(s)$ solve the system $\psi_s(s,0)=\psi(s,0) \cdot\psi_s(s,0)$, with initial condition $\alpha(0)=\dfrac{\pi}{2}, \beta(0)=\dfrac{\pi}{2}$. We use the  stereographic projection in order to plot such a surface, see figure bellow.
\begin{figure}[h]
\begin{center}
\fbox{\includegraphics[scale=.46,trim = 0mm 0mm 0mm 0mm,clip]{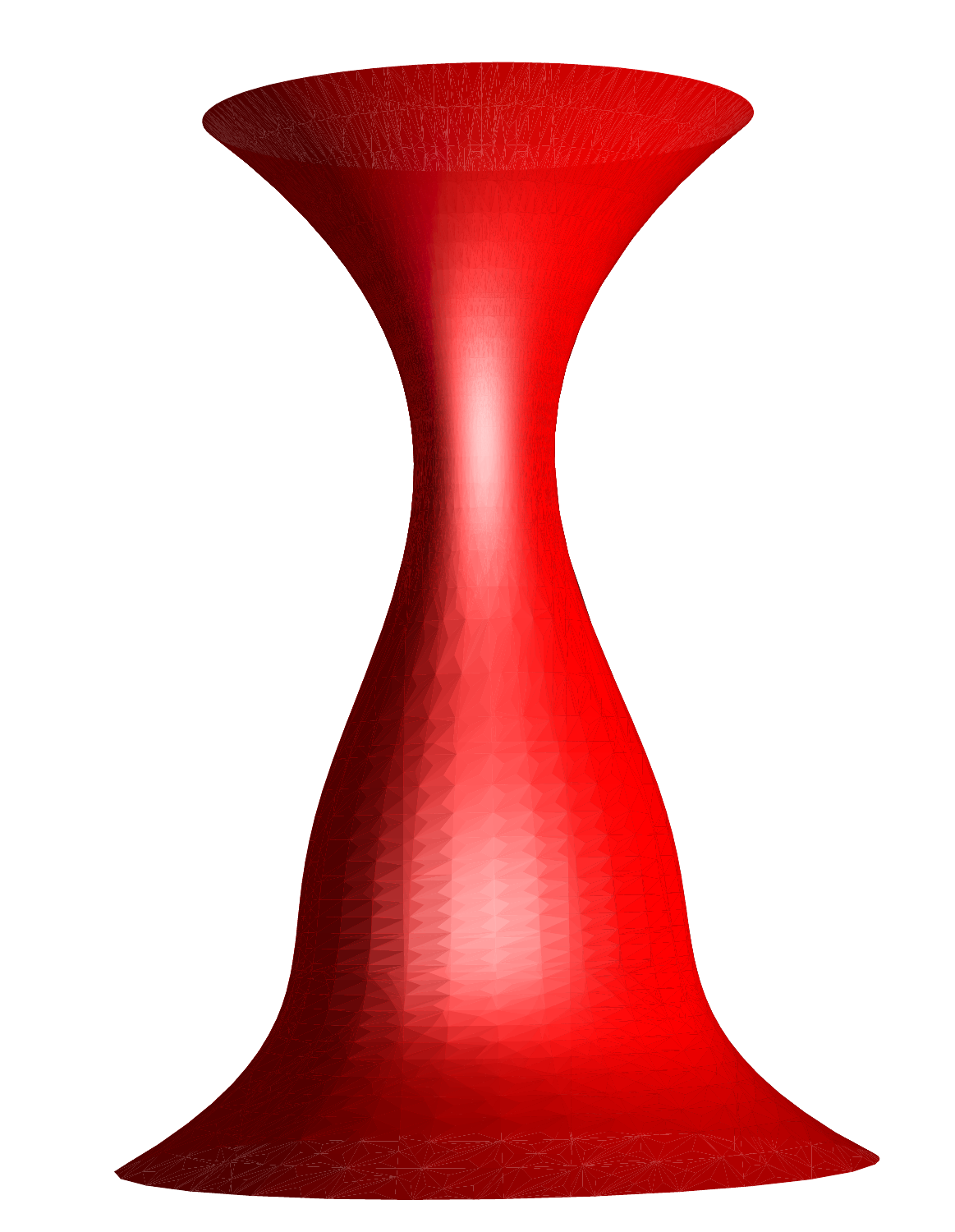}}\hfil
\fbox{\includegraphics[scale=1.0, trim = 5mm 46mm 7mm 36mm, clip]{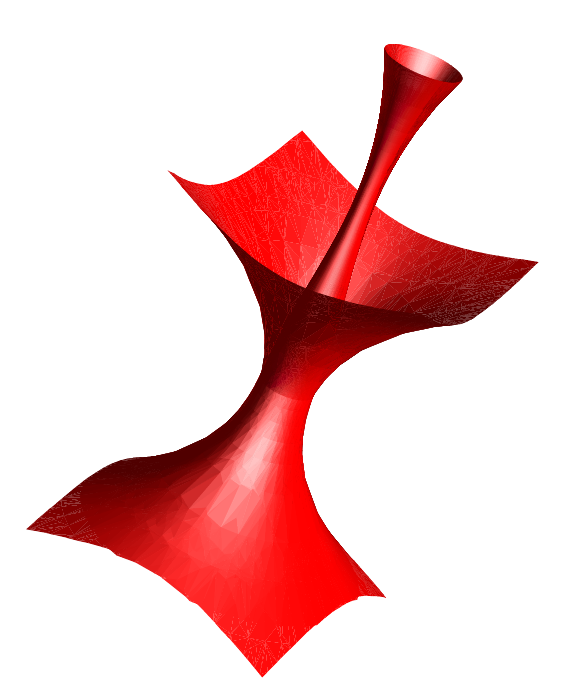}}
\end{center}
\end{figure}

\vspace{1cm}

\begin{tabular}{l|l}
Abigail Folha  &  Carlos Pe\~{n}afiel\\
abigailfolha@vm.uff.br & penafiel@im.ufrj.br\\
Universidade Federal Fluminense\ &  Universidade Federal de Rio de Janeiro\\
Instituto de Matem\'{a}tica- Departamento de Geometria & Instituto de Matem\'{a}tica e Estat\'{i}stica\\
R. M\'{a}rio Santos Braga, s/n   &  Av. Athos da Silveira Ramos 149\\
Campus do Valonguinho & CT, bl. C, Cidade Universit\'{a}ria\\
CEP 24020-140  &  CEP 21941-909\\
Niter\'{o}i, RJ - Brasil. & Rio de Janeiro, RJ - Brasil.\\

\end{tabular}

\end{document}